\newtheorem{theorem}{Theorem}[section]
\newtheorem{Lemma}[theorem]{{\bf Lemma}}
\newtheorem{cor}[theorem]{{\bf Corollary}}
\newtheorem{rem}[theorem]{{\bf Remark}}
\newtheorem{definition}{Definition}[section]
\numberwithin{equation}{section}
\newenvironment{proof}{\indent{\em Proof:}}{\quad \hfill
$\Box$\vspace*{2ex}}
\begin{document}
\setcounter{page}{1}
\begin{center}
\vspace{0.4cm} {\large{\bf Analysis of Nonlinear  Fractional Differential Equations Involving Atangana--Baleanu--Caputo   Derivative  }} \\
\vspace{0.5cm}
Kishor D. Kucche $^{1}$ \\
kdkucche@gmail.com \\
\vspace{0.35cm}
Sagar T. Sutar  $^{2}$\\
sutar.sagar007@gmail.com\\

\vspace{0.35cm}
$^{1}$ Department of Mathematics, Shivaji University, Kolhapur-416 004, Maharashtra, India.\\
$^2$ Department of Mathematics, Vivekanand College (Autonomous), Kolhapur-416003, Maharashtra, India.\\

\end{center}

\def\baselinestretch{1.0}\small\normalsize

\begin{abstract}
In the present paper, we determine the estimations on Atangana--Baleanu--Caputo fractional derivative at extreme points.  With the assistance of the estimations obtained,  we derive the comparison results. Peano's type existence results established for nonlinear fractional differential equations involving Atangana--Baleanu--Caputo fractional derivative. The acquired comparison results are then utilized to deal  with the existence of local, extremal and global solution.
\end{abstract}
\noindent\textbf{Key words:}   Fractional differential equations,  Atangana--Baleanu--Caputo fractional derivative, Fractional differential inequalities, Comparison results, Local and global existence.\\
\noindent
\textbf{2010 Mathematics Subject Classification:} {26A33, 34A12, 34A40, 35B50, 34A99}
\def\baselinestretch{1.5}
\allowdisplaybreaks

\section{Introduction}
The fundamental theory of fractional calculus has been exhibited principally with the two fractional derivative operators Riemann-Liouville and Caputo fractional derivatives. The Riemann-Liouville (RL) fractional derivative plays an important role mainly in the development of the theory of fractional derivatives, integrals and for application to develop various theories in pure mathematics. The RL fractional integral of the Caputo fractional derivative generates the initial conditions in the form of classical derivatives. As a result of which we have a clear physical interpretation in the modeling of real-world phenomena in the form of fractional differential equations (FDEs). Basic calculus and the theoretical development FDEs involving these two fractional derivative operators have been excellently presented in the monographs \cite{Miller,Pod,Kilbas,Laksh6,Diethelm}. 

Kai et al. \cite{KD3} researched the existence and uniqueness of solutions, structural stability and the dependence of the solution on the order of the differential equation and on the initial condition. Daftardar-Gejji et al. \cite{VSG} broadened these investigations for the system of fractional differential equations. Lakshmikantham and  Vatsala \cite{Laksh2,Laksh5} built up the theory of fractional differential and integral inequalities and employed it to explore the existence of extremal  and global solutions of nonlinear FDEs. Basic development relating to investigations on the theory of nonlinear FDEs can be found in \cite{Laksh1,Laksh3,RPA,JRW1} and ongoing advancements can be found in \cite{NDC,GIS} and references therein. 

Researchers working in the field of applied mathematics have been developing the theory of fractional calculus in various directions by defining different types of arbitrary order derivatives and integrals. It is very well known that the traditional fractional derivative operators provide better mathematical modeling of many real-world phenomena than the classical integer derivatives. In spite of this reality, numerous researchers accept that that worthy exactness may not be accomplished in the modeling of physical phenomena involving memory effect in the whole of the time duration due to the presence of a singular kernel in the definition of traditional fractional derivatives. 

To eliminate the singular kernel, a non-singular fractional derivative operator with exponential  kernel is proposed in \cite{Cap1} ,  which is well known as Caputo--Fabrizio (CF) fractional derivative. For the development of the theory relating to FDEs involving CF derivative and it's real-world application one can refer\cite{Dbaleu4,MSAydo,Losda,Atanga} and the references given therein. Motivated by the investigations of \cite{Cap1}, Atangana and Baleanu in \cite{Atan3} proposed new fractional derivative  having  Mittag-Leffler (ML) function as its kernel, which is well known as Atangana--Baleanu--Caputo (ABC) fractional derivative. The basic calculus of ABC-fractional derivative can be found in \cite{Bal1,Dbale1,Fer,Moh,Atan5}. Non-locality of ABC-fractional derivative with singular ML kernel effectively permits taking care of the nonlocal dynamics, computational purposes and capturing the various features of realistic systems more suitably. Mathematical modeling via ABC-FDEs of the various outbreak, such as dengue fever, the free motion of a coupled oscillator, a tumor-immune surveillance mechanism etc. and efficient numerical method to tackle this has been explored in \cite{Amin,Amin1, Dbale2,Amin2,Sume,MSA}.

Jarad et al. \cite{Jarad} provided sufficient conditions for the existence and uniqueness of the solution of nonlinear ABC-FDEs. Authors derived  the Gronwall inequality in the frame of Atangana-Baleanu fractional integral and through it investigated Ulam-Hyers stability of nonlinear ABC-FDEs. Baleanu et al.\cite{Bale4} considered the existence and uniqueness of solution nonlinear ABC-FDEs and structured a numerical procedure dependent on the fractional Euler and predictor-corrector technique.
Syam et al.\cite{Syam}  determined existence and uniqueness results for the linear and nonlinear ABC-FDEs and exhibited a numerical technique dependent on the Cheby-shev collocation technique. Afshari et al. \cite{Af}  demonstrated the existence results for ABC-FDEs utilizing the fixed point theorems for contractive mappings such as $\alpha$-$\gamma$--Geraghty type, $\alpha$-type $\mathcal{F}$-contraction in $\mathcal{F}$-complete metric space.
Ravichandran et al.  \cite{Ravi1,Ravi2,Ravi3,Ravi4} examined the existence and uniqueness of solution for ABC-fractional differential and integrodifferential equations. Shah et al. \cite{shah} analyzed the qualitative theory of existence and stability theory of Ulam’s type for evolution ABC-FDEs.

Even if, many researchers have investigated nonlinear ABC-FDEs, it ought to be seriously analyzed for the qualitative properties. In this view, motivated by the applications and the interesting literature on ABC-FDEs, on the line of  \cite{Laksh2,Laksh5,Moh}, we investigates estimation on ABC-fractional derivatives at extreme points,  comparison results,  local and global existence of a solution and  extremal solution for nonlinear ABC-FDEs of the form 
\begin{align}
^{ABC}{_0\mathcal{D}}_\tau^\alpha \omega(\tau)&= f\left( \tau, \omega(\tau)\right), \tau\in J=[0, T],\; T>0, \label{ABCe1.1} \\
\omega(0)&=\omega_0,\label{ABCe1.2}
\end{align}
where $0<\alpha<1, ~^{ABC}{_0\mathcal{D}}_\tau^\alpha$ is the ABC-  fractional derivative operator, $\omega,~^{ABC}{_0\mathcal{D}}_\tau^\alpha \omega\in C(J)$  and  $f\in C(J\times \mathbb{R},\mathbb{R})$ is continuous non-linear function. The local existence  of  solution to ABC-FDEs \eqref{ABCe1.1}--\eqref{ABCe1.2} is  based on Peano's theorem and the comparison results which we have derived in the present paper.

The novelty of the present paper is that we have obtained comparison results, local and global existence of a solution, and extremal solution without demanding the monotonicity and Holder continuity assumption on the nonlinear function associated with  ABC-FDEs  \eqref{ABCe1.1}--\eqref{ABCe1.2}. Further, we provided alternative proofs to few results of \cite{Moh} pertain to fractional integral inequalities.

This paper is organized as follows. In  section 2, we recall basic definitions and results related with ABC-fractional derivative. Section 3 deals with estimation on ABC-fractional derivative at extreme points. In  section 4, we derive comparison results for ABC-FDEs involving initial and boundary conditions.  Section 5 deals with local existence and extremal of solution of ABC-FDEs  \eqref{ABCe1.1}--\eqref{ABCe1.2}.  In section 5,  the global existence of solution is proved for ABC-FDEs \eqref{ABCe1.1}--\eqref{ABCe1.2}.

\section{Preliminaries}
In this section, we recall some definitions and basic results about ABC-fractional derivative operator and  generalized Mittag--Leffler function.
\begin{definition} {\normalfont\cite{Syam}}
Let $p\in[1,\infty)$ and $\Omega$ be an open subset of $\mathbb{R}$ the Sobolev space $H^p(\Omega)$ is defined as 
$$ H^p(\Omega)=\left\lbrace f\in L^2(\Omega):D^\beta f\in L^2(\Omega), \text{for all } |\beta|\leq p\right\rbrace .$$
\end{definition}
\begin{definition}{\normalfont\cite{Atan3}}
Let $\omega\in H^1(0,T)$ and $\alpha \in [0,1]$, the left Atangana--Baleanu--Caputo fractional derivative of $\omega$ of order $\alpha $ is defined  by
\begin{equation*}
^{ABC}{_0}\mathcal{D}_\tau^\alpha \omega(\tau)=\dfrac{B(\alpha)}{1-\alpha}\int_{0}^\tau\mathbb{E}_\alpha\left[ -\dfrac{\alpha}{1-\alpha}(\tau-\sigma)^\alpha\right] \omega'(\sigma)d\sigma,
\end{equation*} 
where $B(\alpha)>0$ is a normalization function satisfying $B(0)=B(1)=1$ and $\mathbb{E}_\alpha$ is one parameter Mittag-Leffler function  {\normalfont\cite{Kilbas,Diethelm}}  defined by 
$$
\mathbb{E}_\alpha (z)=\sum_{n=0}^{n=\infty}\frac{z^n}{\Gamma(n \alpha+1)}.
$$
The associated fractional integral is defined  by
\begin{equation*}
^{AB}{_0}I_\tau^\alpha \omega(\tau)=\dfrac{1-\alpha}{B(\alpha)} \omega(\tau)+\dfrac{\alpha}{B(\alpha)}~{_0}I_\tau^\alpha \omega(\tau).
\end{equation*} 
where 
$${_0}I_\tau^\alpha \omega(\tau)= \dfrac{1}{\Gamma(\alpha)}\int_{0}^\tau(\tau-\sigma)^{\alpha-1}\omega(\sigma)d\sigma,
$$
is the Riemann--Liouville fractional integral {\normalfont\cite{Kilbas,Diethelm}} of $\omega$ of order $\alpha$.
\end{definition} 
\begin{Lemma}{\normalfont\cite{Bal1}} If $0<\alpha<1$,~ then 
$
^{AB}{_0}I_\tau^\alpha\; \left( ^{ABC}{_0}\mathcal{D}_\tau^\alpha \omega(\tau)\right) =\omega(\tau)-\omega(0).
$
\end{Lemma}

\begin{definition}{\normalfont\cite{pra,kilb,Erd}}
The generalized Mittag-Leffler function $ \mathbb{E}_{\alpha,\beta}^\gamma(z) $ for the complex numbers $\alpha,\beta,\gamma $ with Re($\alpha)>0$ is defined as 
$$
 \mathbb{E}_{\alpha,\beta}^\gamma(z)=\sum_{k=0}^{\infty}\dfrac{(\gamma)_k}{\Gamma(\alpha k+\beta)}\dfrac{z^k}{k!},
$$
where $(\gamma)_k$ is the Pochhammer symbol given by
$$
(\gamma)_0=1,\;(\gamma)_k=\gamma(\gamma+1)\cdots(\gamma+k-1),\; k=1,2,\cdots
$$
\end{definition}
Note that,
$$
\mathbb{E}_{\alpha,\beta}^1(z) =\mathbb{E}_{\alpha,\beta}(z) ~\mbox{and }~\mathbb{E}_{\alpha,1}^1(z) =\mathbb{E}_{\alpha}(z).
$$
\begin{Lemma}{\normalfont\cite{Bal1}}\label{ABCLm2.2} 
Let  $0<\alpha<1$ and $\beta,\sigma,\lambda\in\mathbb{C} \left( Re(\beta)>0 \right)$. Then
$$
^{ABC}{_0\mathcal{D}}_\tau^\alpha \left[\tau^{\beta-1}\;\mathbb{E}_{\alpha,\, \beta}^\sigma \, (\lambda \, \tau^\alpha) \right]=\dfrac{B(\alpha)}{1-\alpha} \; \tau^{\beta-1}\;\mathbb{E}_{\alpha,\,\beta}^{1+\sigma}(\lambda \,\tau^\alpha).
$$
\end{Lemma}
\section{Estimates on ABC fractional derivatives at extreme points}

\begin{theorem}\label{ABCThm3.1}
If $m$ is any differentiable function defined on $J$ such that $^{ABC}{_0\mathcal{D}}_\tau^\alpha m\in C(J)$  and there exists $\tau_0\in (0,T]$ with $m(\tau_0)= 0,\;m(\tau)\leq 0,\; \tau\in [0,\tau_0)$,  then  
$^{ABC}{_0\mathcal{D}}_\tau^\alpha m(\tau_0)\geq 0$.
\end{theorem}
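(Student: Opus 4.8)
The plan is to argue directly from the integral definition of the ABC-derivative. Put $c_\alpha=\frac{\alpha}{1-\alpha}>0$ and let $k(\sigma)=\mathbb{E}_\alpha\!\left[-c_\alpha(\tau_0-\sigma)^\alpha\right]$, so that
\[
{}^{ABC}_0\mathcal{D}_\tau^\alpha m(\tau_0)=\frac{B(\alpha)}{1-\alpha}\int_0^{\tau_0} k(\sigma)\,m'(\sigma)\,d\sigma .
\]
Since $\frac{B(\alpha)}{1-\alpha}>0$, it suffices to show the integral is nonnegative. Because $m'$ need not have a fixed sign, I would transfer the sign information onto $m$ (whose sign is known on $[0,\tau_0)$) via integration by parts.

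First I would record two elementary properties of the kernel: for $0<\alpha<1$ the scalar function $x\mapsto\mathbb{E}_\alpha(-x)$ on $[0,\infty)$ is strictly positive and non-increasing (it is in fact completely monotone, a classical fact about the one-parameter Mittag--Leffler function). Hence $k>0$ on $[0,\tau_0]$ with $k(\tau_0)=\mathbb{E}_\alpha(0)=1$, and since $\sigma\mapsto -c_\alpha(\tau_0-\sigma)^\alpha$ is non-decreasing on $[0,\tau_0]$, the composition $k$ is non-decreasing there, i.e. $k'\ge 0$ on $(0,\tau_0)$. I would also remark that $\sigma\mapsto(\tau_0-\sigma)^\alpha$ is absolutely continuous on $[0,\tau_0]$ (its derivative is singular only at $\sigma=\tau_0$ but remains integrable), so $k$ is absolutely continuous and integration by parts against $m$ is legitimate, $m$ being absolutely continuous by the standing differentiability/integrability assumptions needed for the ABC-derivative to be defined.

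Integrating by parts then yields
\[
\int_0^{\tau_0} k(\sigma)\,m'(\sigma)\,d\sigma
=\left[k(\sigma)m(\sigma)\right]_0^{\tau_0}-\int_0^{\tau_0} k'(\sigma)\,m(\sigma)\,d\sigma
=-\,k(0)\,m(0)-\int_0^{\tau_0} k'(\sigma)\,m(\sigma)\,d\sigma ,
\]
where the term at $\tau_0$ drops out since $m(\tau_0)=0$. The boundary term $-k(0)m(0)$ is nonnegative because $k(0)>0$ and $m(0)\le 0$, while the integral is nonnegative because $k'(\sigma)\ge 0$ and $m(\sigma)\le 0$ on $(0,\tau_0)$, so $-k'(\sigma)m(\sigma)\ge 0$. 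Therefore $\int_0^{\tau_0}k\,m'\,d\sigma\ge 0$, and multiplying by the positive constant gives ${}^{ABC}_0\mathcal{D}_\tau^\alpha m(\tau_0)\ge 0$.

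The only genuine subtlety — the part I expect to need the most care — is the justification of the two structural facts about the kernel (positivity and monotonicity of $x\mapsto\mathbb{E}_\alpha(-x)$), together with the absolute continuity of $(\tau_0-\cdot)^\alpha$ that makes the integration by parts valid; once these are in place the argument is just bookkeeping with signs. An alternative, if one wishes to avoid invoking complete monotonicity, is to obtain the needed positivity/monotonicity of $k$ from series manipulations or from the differentiation identity in Lemma~\ref{ABCLm2.2}, but citing the classical behaviour of $\mathbb{E}_\alpha(-x)$ is the cleanest route.
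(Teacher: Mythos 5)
Your proposal is correct and follows essentially the same route as the paper: integrate by parts, kill the boundary term at $\tau_0$ using $m(\tau_0)=0$, and then use the positivity of $\mathbb{E}_\alpha(-x)$ (which the paper obtains from the completely monotone integral representation $\mathbb{E}_\alpha(-\tau^\alpha)=\int_0^\infty e^{-r\tau}K_\alpha(r)\,dr$ cited from Al-Refai) together with the nonnegativity of $\frac{d}{d\sigma}\mathbb{E}_\alpha\bigl(-\tfrac{\alpha}{1-\alpha}(\tau_0-\sigma)^\alpha\bigr)$ and the sign of $m$ to conclude. Your extra remarks on absolute continuity justifying the integration by parts are a welcome refinement but do not change the argument.
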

\begin{proof}
Using integration by parts we write
\begin{align*}
&^{ABC}{_0\mathcal{D}}_\tau^\alpha m(\tau)=\dfrac{B(\alpha)}{1-\alpha}\int_{0}^\tau\mathbb{E}_\alpha\left( -\dfrac{\alpha}{1-\alpha}(\tau-\sigma)^\alpha\right) m'(\sigma)d\sigma\\
&=\dfrac{B(\alpha)}{1-\alpha}\left\{ \left[ \mathbb{E}_\alpha\left( -\dfrac{\alpha}{1-\alpha}(\tau-\sigma)^\alpha\right) m(\sigma)  \right]  _{\sigma=0}^{\sigma=\tau} -\int_{0}^\tau\left( \dfrac{d}{d\sigma}\mathbb{E}_\alpha\left( -\dfrac{\alpha}{1-\alpha}(\tau-\sigma)^\alpha\right) \right) m(\sigma)d\sigma\right\}\\
&=\dfrac{B(\alpha)}{1-\alpha}\left\lbrace  m(\tau)-\mathbb{E}_\alpha\left ( -\dfrac{\alpha}{1-\alpha}\tau^\alpha\right) m(0)  -\int_{0}^\tau\left( \dfrac{d}{d\sigma}\mathbb{E}_\alpha\left( -\dfrac{\alpha}{1-\alpha}(\tau-\sigma)^\alpha\right) \right) m(\sigma)d\sigma\right\rbrace .
\end{align*} 
Since $m(\tau_0)=0$, we have
\begin{align}\label{ABCe3.1}
^{ABC}{_0\mathcal{D}}_\tau^\alpha m(\tau_0)&=-\dfrac{B(\alpha)}{1-\alpha}\left\lbrace \mathbb{E}_\alpha\left ( -\dfrac{\alpha}{1-\alpha}\tau_0^\alpha\right) m(0) +\int_{0}^{\tau_0}\left( \dfrac{d}{d\sigma}\mathbb{E}_\alpha\left[ -\dfrac{\alpha}{1-\alpha}(\tau_0-\sigma)^\alpha\right] \right) m(\sigma)d\sigma\right\rbrace .
\end{align} 
From \cite{Moh}, we have
$$
\mathbb{E}_\alpha(-\tau^\alpha)= \int_{0}^{\infty}e^{-r\tau}K_\alpha(r)dr, \;0<\alpha<1,\;\mbox{for all} ~\tau>0,
$$
where 
$$
K_\alpha(r)=\dfrac{1}{\pi}\dfrac{r^{\alpha-1}\sin(\alpha\pi)}{r^{2\alpha}+2r^\alpha\cos(\alpha\pi)+1}>0.
$$
Since
$
K_\alpha(r), ~e^{- r \,\left ( \frac{\alpha}{1-\alpha}\right)^{\frac{1}{\alpha}} \tau_0 }>0,\;\text{for}\; r>0 ~\mbox{and}~ 0<\alpha<1,
$, we have
\begin{equation} \label{kk1}
\mathbb{E}_\alpha\left ( -\dfrac{\alpha}{1-\alpha}\tau_0^\alpha\right)=\mathbb{E}_\alpha\left ( -\left( \left[ \dfrac{\alpha}{1-\alpha}\right]^\frac{1}{\alpha} \tau_0\right)^\alpha\; \right)=\int_{0}^{\infty}e^{- r \,\left ( \frac{\alpha}{1-\alpha}\right)^{\frac{1}{\alpha}} \tau_0 } \, K_\alpha(r)\,dr> 0.
\end{equation}
Since, $B(\alpha)>0$ and $m(0)\leq 0$, from \eqref{kk1} it follows that 
\begin{equation}\label{ABCe3.2}
-\dfrac{B(\alpha)}{1-\alpha}\mathbb{E}_\alpha\left( -\dfrac{\alpha}{1-\alpha}\tau_0^\alpha\right) m(0)\geq 0.
\end{equation}
In view of inequality \eqref{ABCe3.2}, Eq. \eqref{ABCe3.1} reduces 
to
$$ 
^{ABC}{_0\mathcal{D}}_\tau^\alpha m(\tau_0)\geq  -\dfrac{B(\alpha)}{1-\alpha}\int_{0}^{\tau_0}\left( \dfrac{d}{d\sigma}\mathbb{E}_\alpha\left ( -\dfrac{\alpha}{1-\alpha}(\tau_0-\sigma)^\alpha\right) \right) m(\sigma)d\sigma.
$$
Again from \cite{Moh}, we have
 $$
\dfrac{d}{d\tau}\mathbb{E}_\alpha\left(- \dfrac{\alpha}{1-\alpha}(\tau_0-\tau)^\alpha\right)  \geq 0, 
$$
Therefore, $m(\tau)\leq 0, \;\tau\in [0,\tau_0)
$ gives 
\begin{equation} \label{kk2}
\dfrac{d}{d\tau}\mathbb{E}_\alpha\left[ -\dfrac{\alpha}{1-\alpha}(\tau_0-\tau)^\alpha\right](-m(\tau))\geq 0, ~\tau\in [0,\tau_0).
\end{equation}
Using the inequality \eqref{kk2} and the fact $ B(\alpha)>0$, it follows that
$$
^{ABC}{_0\mathcal{D}}_\tau^\alpha m(\tau_0)\geq \dfrac{B(\alpha)}{1-\alpha}\int_{0}^{\tau_0}\left( \dfrac{d}{d\sigma}\mathbb{E}_\alpha\left(- \dfrac{\alpha}{1-\alpha}(\tau_0-\sigma)^\alpha\right) \right) \left( -m(\sigma)\right) d\sigma\geq 0.
$$ 
This completes the proof.
\end{proof}

The dual of the Theorem \ref{ABCThm3.1} is  also hold.
\begin{theorem}\label{ABCThm3.2}
If $m$ is any differentiable function defined on $J$ such that $^{ABC}{_0\mathcal{D}}_\tau^\alpha m\in C(J)$ and there exists $\tau_0\in (0,T]$ with $m(\tau_0)= 0, \;m(\tau)\geq 0,\; \tau\in [0,\tau_0)$,  then  
$^{ABC}{_0\mathcal{D}}_\tau^\alpha m(\tau_0)\leq 0$.
\end{theorem}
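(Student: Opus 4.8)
The plan is to deduce Theorem \ref{ABCThm3.2} directly from Theorem \ref{ABCThm3.1} by a sign change, rather than repeating the integration-by-parts computation. First I would set $\widetilde{m}=-m$ and observe that $\widetilde{m}$ inherits differentiability on $J$ from $m$, and that the ABC-derivative operator is linear in its argument because it is defined by an integral against the fixed kernel $\mathbb{E}_\alpha\!\left(-\frac{\alpha}{1-\alpha}(\tau-\sigma)^\alpha\right)$; hence $^{ABC}{_0\mathcal{D}}_\tau^\alpha \widetilde{m}(\tau)=-\,^{ABC}{_0\mathcal{D}}_\tau^\alpha m(\tau)$ for every $\tau\in J$, which in particular shows $^{ABC}{_0\mathcal{D}}_\tau^\alpha \widetilde{m}\in C(J)$.

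Next I would translate the hypotheses to $\widetilde{m}$: since $m(\tau_0)=0$ we get $\widetilde{m}(\tau_0)=0$, and since $m(\tau)\geq 0$ on $[0,\tau_0)$ we get $\widetilde{m}(\tau)\leq 0$ on $[0,\tau_0)$. Thus $\widetilde{m}$ satisfies exactly the hypotheses of Theorem \ref{ABCThm3.1} at the same point $\tau_0\in(0,T]$, and applying that theorem yields $^{ABC}{_0\mathcal{D}}_\tau^\alpha \widetilde{m}(\tau_0)\geq 0$. Substituting back the identity $^{ABC}{_0\mathcal{D}}_\tau^\alpha \widetilde{m}(\tau_0)=-\,^{ABC}{_0\mathcal{D}}_\tau^\alpha m(\tau_0)$ gives $-\,^{ABC}{_0\mathcal{D}}_\tau^\alpha m(\tau_0)\geq 0$, i.e.\ $^{ABC}{_0\mathcal{D}}_\tau^\alpha m(\tau_0)\leq 0$, which is the claim.

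There is essentially no obstacle here; the only point worth stating carefully is the linearity of the operator $^{ABC}{_0\mathcal{D}}_\tau^\alpha$. If a self-contained argument were preferred, one could instead mirror the proof of Theorem \ref{ABCThm3.1} line by line: after integration by parts and using $m(\tau_0)=0$ one reaches the analogue of \eqref{ABCe3.1}, then uses the positivity $K_\alpha(r)>0$ and the monotonicity $\frac{d}{d\tau}\mathbb{E}_\alpha\!\left(-\frac{\alpha}{1-\alpha}(\tau_0-\tau)^\alpha\right)\geq 0$ together with the reversed sign conditions $m(0)\geq 0$ and $m(\tau)\geq 0$ on $[0,\tau_0)$ to flip every inequality appearing in \eqref{ABCe3.2} and \eqref{kk2}, arriving at $^{ABC}{_0\mathcal{D}}_\tau^\alpha m(\tau_0)\leq 0$. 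I would present the short deduction via $\widetilde{m}=-m$ as the main proof.
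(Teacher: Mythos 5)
Your proposal is correct and is essentially identical to the paper's own proof, which likewise applies Theorem \ref{ABCThm3.1} to $-m$ and uses linearity of the ABC-derivative to conclude $^{ABC}{_0\mathcal{D}}_\tau^\alpha m(\tau_0)\leq 0$. Your explicit remark on why the operator is linear is a small but welcome addition of detail.
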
 
\begin{proof}
One can observe that if $m(\tau)$ satisfies the assumptions of Theorem \ref{ABCThm3.2}, then $(-m)(\tau)$ satisfies the conditions of Theorem \ref{ABCThm3.1}. Hence by applying Theorem \ref{ABCThm3.1} with $m$ replaced by $(-m)$ we obtain, $^{ABC}{_0\mathcal{D}}_\tau^\alpha (-m)(\tau_0)\geq 0$.
This gives
$$
^{ABC}{_0\mathcal{D}}_\tau^\alpha m(\tau_0)\leq 0.
$$
\end{proof}

In the following Theorem, we give an alternative  proof of Lemma 2.1  \cite{Moh} utilizing the outcome that we have gotten  in Theorem \ref{ABCThm3.1}.
\begin{theorem}\label{ABCThm3.3}
Let a differentiable  function $f$ defined on $J$   attain its maximum at a point $\tau_0\in J$ and \; $^{ABC}{_0\mathcal{D}}_\tau^\alpha f\in C(J)$. Then the inequality
\;$^{ABC}{_0\mathcal{D}}_\tau^\alpha f(\tau_0)\geq 0$
holds true.
\end{theorem}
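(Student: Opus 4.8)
The plan is to reduce Theorem \ref{ABCThm3.3} directly to Theorem \ref{ABCThm3.1} by an auxiliary shift. Set $m(\tau) = f(\tau) - f(\tau_0)$ on $J$. Since $f$ attains its maximum at $\tau_0$, we have $m(\tau) \le 0$ for every $\tau \in J$, and in particular $m(\tau) \le 0$ on $[0,\tau_0)$, while $m(\tau_0) = 0$. The function $m$ is differentiable because $f$ is, so the remaining point to check is the regularity hypothesis $^{ABC}{_0\mathcal{D}}_\tau^\alpha m \in C(J)$.

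For that, I would use linearity of the ABC-operator together with the observation that the ABC-derivative of a constant vanishes: in the defining integral the integrand carries the factor $\omega'(\sigma)$, which is identically zero when $\omega$ is constant, hence $^{ABC}{_0\mathcal{D}}_\tau^\alpha f(\tau_0) \equiv 0$ as a function of $\tau$. Therefore $^{ABC}{_0\mathcal{D}}_\tau^\alpha m(\tau) = {}^{ABC}{_0\mathcal{D}}_\tau^\alpha f(\tau) - {}^{ABC}{_0\mathcal{D}}_\tau^\alpha f(\tau_0) = {}^{ABC}{_0\mathcal{D}}_\tau^\alpha f(\tau)$, which is continuous on $J$ by hypothesis.

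The only case not covered by Theorem \ref{ABCThm3.1} is $\tau_0 = 0$, since that theorem requires $\tau_0 \in (0,T]$. I would dispose of this case at the outset: if the maximum is attained at $\tau_0 = 0$, then $^{ABC}{_0\mathcal{D}}_\tau^\alpha f(0) = \frac{B(\alpha)}{1-\alpha}\int_0^0 \mathbb{E}_\alpha(\cdots)\,f'(\sigma)\,d\sigma = 0 \ge 0$, so the inequality holds trivially. Assuming then $\tau_0 \in (0,T]$, the function $m$ satisfies all the hypotheses of Theorem \ref{ABCThm3.1}, which yields $^{ABC}{_0\mathcal{D}}_\tau^\alpha m(\tau_0) \ge 0$, and by the identity above this is exactly $^{ABC}{_0\mathcal{D}}_\tau^\alpha f(\tau_0) \ge 0$.

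There is no real obstacle here; the proof is a short corollary-style argument. The only subtlety worth stating explicitly is the verification that $^{ABC}{_0\mathcal{D}}_\tau^\alpha$ annihilates constants, so that translating $f$ does not disturb the continuity hypothesis needed to invoke Theorem \ref{ABCThm3.1}; everything else is immediate from the maximum property of $\tau_0$.
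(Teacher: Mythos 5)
Your proposal is correct and follows essentially the same route as the paper: shift $f$ by its maximum value, note that the ABC-derivative annihilates constants so the regularity hypothesis transfers, and invoke Theorem \ref{ABCThm3.1}. Your treatment is in fact slightly more careful than the paper's, since you explicitly dispose of the case $\tau_0=0$ (where Theorem \ref{ABCThm3.1} does not apply but the derivative vanishes) and you correctly use $m(\tau)\le 0$ rather than the paper's unnecessary strict inequality.
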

\begin{proof} Let $f_{\max}=f(\tau_0)=\underset{\tau\in J}{\max}\;f(\tau)$. Define $ m(\tau)=f(\tau)-f_{\max},~\tau\in J$. Then $m$ is differentiable function defined on $J$ such that, $^{ABC}{_0\mathcal{D}}_\tau^\alpha m\in C(J)$ and 
$$
m(\tau_0)=0,\;m(\tau)=f(\tau)-f_{\max}<0,\; \text{for all}\; \tau\in J \, \backslash \,\{\tau_0\} 
$$
Therefore, $m(\tau_0)=0$ and $ m(\tau)<0,\; \text{for all}\; \tau\in [0,\tau_0).$
Since $m$ satisfies all the conditions of  Theorem \ref{ABCThm3.1}, by applying it we obtain $^{ABC}{_0\mathcal{D}}_\tau^\alpha m(\tau_0)\geq 0$.
Since,
$$
^{ABC}{_0\mathcal{D}}_\tau^\alpha m(\tau)=\;^{ABC}{_0\mathcal{D}}_\tau^\alpha \left(f(\tau)-f_{\max}\right)=\;^{ABC}{_0\mathcal{D}}_\tau^\alpha f(\tau),\;\tau\in J,
$$
we have
$$
^{ABC}{_0\mathcal{D}}_\tau^\alpha f(\tau_0)=\;^{ABC}{_0\mathcal{D}}_\tau^\alpha m(\tau_0)\geq 0.
$$
\end{proof}

In the next  Theorem, we provide an alternating proof of Lemma 2.2 \cite{Moh}.
\begin{theorem}
Let a differentiable  function $f$ defined on $J$   attain its minimum at a point $\tau_0\in J$ and\;  $^{ABC}{_0\mathcal{D}}_\tau^\alpha f\in C(J)$. Then the inequality
\;$^{ABC}{_0\mathcal{D}}_\tau^\alpha f(\tau_0)\leq 0$
holds true.
\end{theorem}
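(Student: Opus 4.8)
The plan is to reduce this statement to Theorem~\ref{ABCThm3.3} (equivalently, to Theorem~\ref{ABCThm3.2}) by exploiting the linearity of the operator $^{ABC}{_0\mathcal{D}}_\tau^\alpha$ together with the sign-reversing substitution $f\mapsto -f$, exactly in the spirit of how Theorem~\ref{ABCThm3.2} was deduced from Theorem~\ref{ABCThm3.1}.

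First I would set $g=-f$. Since $f$ is differentiable on $J$ and attains its minimum at $\tau_0\in J$, the function $g$ is differentiable on $J$ and attains its maximum at $\tau_0$; moreover, because the defining integral of the ABC-derivative is linear in $\omega'$, we have $^{ABC}{_0\mathcal{D}}_\tau^\alpha g=-\,^{ABC}{_0\mathcal{D}}_\tau^\alpha f\in C(J)$. Hence $g$ satisfies all the hypotheses of Theorem~\ref{ABCThm3.3}, and applying that theorem gives $^{ABC}{_0\mathcal{D}}_\tau^\alpha g(\tau_0)\geq 0$. Since $^{ABC}{_0\mathcal{D}}_\tau^\alpha g(\tau_0)=-\,^{ABC}{_0\mathcal{D}}_\tau^\alpha f(\tau_0)$, this yields $^{ABC}{_0\mathcal{D}}_\tau^\alpha f(\tau_0)\leq 0$, as required.

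Alternatively, one can mimic the proof of Theorem~\ref{ABCThm3.3} verbatim with ``maximum'' replaced by ``minimum'': put $f_{\min}=f(\tau_0)=\min_{\tau\in J}f(\tau)$ and $m(\tau)=f(\tau)-f_{\min}$, so that $m(\tau_0)=0$ and $m(\tau)\geq 0$ for all $\tau\in[0,\tau_0)$; then $m$ meets the hypotheses of Theorem~\ref{ABCThm3.2}, which gives $^{ABC}{_0\mathcal{D}}_\tau^\alpha m(\tau_0)\leq 0$, and since $^{ABC}{_0\mathcal{D}}_\tau^\alpha m=\,^{ABC}{_0\mathcal{D}}_\tau^\alpha f$ the conclusion follows. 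There is no substantial obstacle here; the only point to note is that Theorems~\ref{ABCThm3.1}--\ref{ABCThm3.2} only require control of the function on $[0,\tau_0)$, and a global minimum at $\tau_0$ trivially forces $m(\tau)\geq 0$ on that interval, so the reduction is clean.
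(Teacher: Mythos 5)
Your proposal is correct, and your second route (setting $m(\tau)=f(\tau)-f_{\min}$ and invoking Theorem~\ref{ABCThm3.2}) is exactly the argument the paper gives; your first route via $g=-f$ and Theorem~\ref{ABCThm3.3} is a trivially equivalent repackaging of the same sign-flip reduction. No gaps.
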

\begin{proof}
Let $f_{\min}=f(\tau_0)=\underset{\tau\in J}{\min}f(\tau)$ and define $ m(\tau)=f(\tau)-f_{\min},~\tau\in J$. Then, one can complete the remaining proof by applying Theorem \ref{ABCThm3.2} and following the similar types of steps  as in the proof of Theorem \ref{ABCThm3.3}.
\end{proof}

\section{Comparison Results} 
\begin{theorem}\label{ABCThm3.5}
Let $f\in C(J\times\mathbb{R},\mathbb{R})$. Let  $v,w$ be any differentiable functions on $J$ such that $^{ABC}{_0\mathcal{D}}_\tau^\alpha\; v,~^{ABC}{_0\mathcal{D}}_\tau^\alpha w\in C(J)$,  satisfying
\item[\normalfont(i)] $^{ABC}{_0\mathcal{D}}_\tau^\alpha v(\tau)\leq  f\left( \tau, v(\tau)\right),\; \tau\in J, $
\item[\normalfont(ii)] $^{ABC}{_0\mathcal{D}}_\tau^\alpha w(\tau)\geq  f\left( \tau, w(\tau)\right),\; \tau\in J,$

\noindent one of the above inequalities being strict. 

\noindent Then $v(0)<w(0)$, implies
$$
v(\tau)< w(\tau),\;\tau\in J.
$$
\end{theorem}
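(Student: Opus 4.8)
The plan is to argue by contradiction and reduce everything to Theorem \ref{ABCThm3.1} applied to the difference of the two functions. Suppose the conclusion fails. Since $v(0)<w(0)$ and $v,w$ are continuous on $J$, the set $S=\{\tau\in J : v(\tau)\geq w(\tau)\}$ is nonempty and closed with $0\notin S$; hence $\tau_0:=\inf S$ satisfies $0<\tau_0\leq T$, and by continuity $v(\tau_0)=w(\tau_0)$ while $v(\tau)<w(\tau)$ for every $\tau\in[0,\tau_0)$. In other words, $\tau_0$ is the first crossing time of $v$ and $w$.

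Next I would introduce $m(\tau)=v(\tau)-w(\tau)$, $\tau\in J$. By hypothesis $m$ is differentiable on $J$, and since the ABC-derivative operator is linear, ${}^{ABC}{_0\mathcal{D}}_\tau^\alpha m={}^{ABC}{_0\mathcal{D}}_\tau^\alpha v-{}^{ABC}{_0\mathcal{D}}_\tau^\alpha w\in C(J)$; moreover $\tau_0\in(0,T]$, $m(\tau_0)=0$, and $m(\tau)\leq 0$ on $[0,\tau_0)$. Thus $m$ meets all hypotheses of Theorem \ref{ABCThm3.1}, so that
$$
{}^{ABC}{_0\mathcal{D}}_\tau^\alpha m(\tau_0)\geq 0,\qquad\text{that is,}\qquad {}^{ABC}{_0\mathcal{D}}_\tau^\alpha v(\tau_0)\geq {}^{ABC}{_0\mathcal{D}}_\tau^\alpha w(\tau_0).
$$
On the other hand, combining (i), (ii) with the equality $v(\tau_0)=w(\tau_0)$ gives
$$
{}^{ABC}{_0\mathcal{D}}_\tau^\alpha v(\tau_0)\leq f(\tau_0,v(\tau_0))=f(\tau_0,w(\tau_0))\leq {}^{ABC}{_0\mathcal{D}}_\tau^\alpha w(\tau_0).
$$
Since one of (i), (ii) is assumed strict, one of the two $\leq$ signs above is in fact a strict $<$ at $\tau_0$, whence ${}^{ABC}{_0\mathcal{D}}_\tau^\alpha v(\tau_0)<{}^{ABC}{_0\mathcal{D}}_\tau^\alpha w(\tau_0)$. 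This contradicts the inequality just obtained from Theorem \ref{ABCThm3.1}, and the contradiction forces $v(\tau)<w(\tau)$ for all $\tau\in J$.

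I do not expect a serious obstacle, as the argument is purely qualitative — the familiar ``contradiction at the first crossing point'' scheme from integer-order differential inequalities. The two points requiring care are: first, verifying that the crossing time satisfies $\tau_0>0$ (this is precisely where the strict initial gap $v(0)<w(0)$ is used, and it is needed so that the hypothesis $\tau_0\in(0,T]$ of Theorem \ref{ABCThm3.1} actually applies); and second, checking that the strictness assumed in (i) or (ii) genuinely propagates, at the single point $\tau_0$, into a strict inequality between the two fractional derivatives. Everything else is routine.
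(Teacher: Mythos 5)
Your proof is correct and follows essentially the same route as the paper: contradiction at the first crossing point $\tau_0$, reduction to $m=v-w$, and an application of Theorem \ref{ABCThm3.1} to get ${}^{ABC}{_0\mathcal{D}}_\tau^\alpha v(\tau_0)\geq {}^{ABC}{_0\mathcal{D}}_\tau^\alpha w(\tau_0)$, which the strict inequality in (i) or (ii) then contradicts. The only cosmetic difference is that the paper reads the contradiction off as $f(\tau_0,v(\tau_0))>f(\tau_0,w(\tau_0))$ versus $v(\tau_0)=w(\tau_0)$, while you read it off as a strict inequality between the two fractional derivatives; these are the same chain of inequalities.
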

\begin{proof}
Suppose that the conclusion of the theorem does not holds. Then by continuity of $v,w$ there exits  $\tau_0\in J$ such that
$$
v(\tau_0)=w(\tau_0)\; \text{and}\; v(\tau)< w(\tau)\;\text{for all} \; \tau\in[0,\tau_0).
$$

\noindent Define $m(\tau)=v(\tau)-w(\tau),\; \tau\in J$.  Then the function $m(\tau)$ is differentiable on $J$ with $^{ABC}{_0\mathcal{D}}_\tau^\alpha m\in C(J)$ and $\tau_0\in J$ is such that
$$
m(\tau_0)=0\; \text{and}\; m(\tau)< 0\;\text{for all} \; \tau\in[0,\tau_0).
$$
Since $m$ satisfies all assumptions of Theorem \ref{ABCThm3.1}, we get  $^{ABC}{_0\mathcal{D}}_\tau^\alpha m(\tau_0)\geq 0$. 

\noindent This gives 
$$
^{ABC}{_0\mathcal{D}}_\tau^\alpha v(\tau_0)\geq\; ^{ABC}{_0\mathcal{D}}_\tau^\alpha w(\tau_0).
$$

\noindent Suppose that the inequality (i)  is strict, then we get
$$
f\left( \tau_0, v(\tau_0)\right)>\; ^{ABC}{_0\mathcal{D}}_\tau^\alpha v(\tau_0)\geq\; ^{ABC}{_0\mathcal{D}}_\tau^\alpha w(\tau_0) \geq f\left( \tau_0, w(\tau_0)\right).
$$

\noindent This is  contradiction with  $v(\tau_0)=w(\tau_0)$. Therefore, we must have 
$$
v(\tau)<w(\tau),\;\text{for all}\; \tau\in J.
$$
This completes the proof of theorem.
\end{proof}
\begin{theorem}\label{ABCThm3.6}
Assume  that the conditions of Theorem \ref{ABCThm3.5} holds {\normalfont(ii)}. Suppose that
$$
 f(\tau,\omega)-f(\tau,\eta)\leq L(\omega-\eta),\; \text{for all }\; \omega,\eta\in \mathbb{R}\;\text{with}\;\omega\geq \eta\;\text{and}\; 0<L<\dfrac{B(\alpha)}{1-\alpha}.
$$
Then $v(0)\leq w(0)$ implies
$$
v(\tau)\leq w(\tau),\; \text{for all}\;\tau\in J.
$$
\end{theorem}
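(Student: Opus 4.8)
The plan is to deduce Theorem~\ref{ABCThm3.6} from the strict comparison principle of Theorem~\ref{ABCThm3.5} by an $\epsilon$–perturbation of $w$ followed by a passage to the limit. For $\epsilon>0$ and a fixed $\lambda>0$, I would set
\[
w_\epsilon(\tau)=w(\tau)+\epsilon\,\phi(\tau),\qquad \phi(\tau)=\mathbb{E}_\alpha(\lambda\tau^\alpha),\quad \tau\in J .
\]
Since $\mathbb{E}_\alpha(\lambda\tau^\alpha)\geq 1$ on $J$, we have $\phi>0$ there; by Lemma~\ref{ABCLm2.2} (with $\beta=1,\ \sigma=1$) $^{ABC}{_0\mathcal{D}}_\tau^\alpha\phi\in C(J)$, so $w_\epsilon$ is an admissible competitor in Theorem~\ref{ABCThm3.5}, and $\phi(0)=1$ gives at once $w_\epsilon(0)=w(0)+\epsilon>w(0)\geq v(0)$.

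The heart of the argument is to check that $w_\epsilon$ satisfies hypothesis (ii) of Theorem~\ref{ABCThm3.5} with \emph{strict} inequality. By Lemma~\ref{ABCLm2.2},
\[
{}^{ABC}{_0\mathcal{D}}_\tau^\alpha\phi(\tau)=\frac{B(\alpha)}{1-\alpha}\,\mathbb{E}_{\alpha,1}^{2}(\lambda\tau^\alpha),
\]
and since $(2)_k/k!=k+1\geq 1$, a termwise comparison of power series gives $\mathbb{E}_{\alpha,1}^{2}(z)\geq \mathbb{E}_{\alpha,1}^{1}(z)=\mathbb{E}_\alpha(z)$ for $z\geq 0$; hence ${}^{ABC}{_0\mathcal{D}}_\tau^\alpha\phi(\tau)\geq \frac{B(\alpha)}{1-\alpha}\phi(\tau)>L\,\phi(\tau)$ on $J$, the last (strict) step being exactly where the assumption $0<L<B(\alpha)/(1-\alpha)$ is used. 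Using linearity of $^{ABC}{_0\mathcal{D}}_\tau^\alpha$, hypothesis (ii) for $w$, the inequality $w_\epsilon\geq w$ on $J$, and the one–sided Lipschitz bound $f(\tau,w_\epsilon(\tau))-f(\tau,w(\tau))\leq L\bigl(w_\epsilon(\tau)-w(\tau)\bigr)=L\epsilon\,\phi(\tau)$, one then obtains
\[
{}^{ABC}{_0\mathcal{D}}_\tau^\alpha w_\epsilon(\tau)={}^{ABC}{_0\mathcal{D}}_\tau^\alpha w(\tau)+\epsilon\,{}^{ABC}{_0\mathcal{D}}_\tau^\alpha\phi(\tau)>f(\tau,w(\tau))+L\epsilon\,\phi(\tau)\geq f(\tau,w_\epsilon(\tau)),\quad \tau\in J .
\]

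Now $v$ satisfies (i), $w_\epsilon$ satisfies the strict form of (ii), and $v(0)<w_\epsilon(0)$, so Theorem~\ref{ABCThm3.5}, applied with $w_\epsilon$ in place of $w$, gives $v(\tau)<w_\epsilon(\tau)=w(\tau)+\epsilon\,\mathbb{E}_\alpha(\lambda\tau^\alpha)$ for all $\tau\in J$. As $\mathbb{E}_\alpha(\lambda\tau^\alpha)$ is bounded on the compact interval $J$, letting $\epsilon\downarrow 0$ yields $v(\tau)\leq w(\tau)$ for every $\tau\in J$, which is the assertion. I expect the only genuinely delicate point to be the construction of the perturbing function: one needs a strictly positive $\phi$ on $J$ with $^{ABC}{_0\mathcal{D}}_\tau^\alpha\phi\geq L\phi$, and the interval $0<L<B(\alpha)/(1-\alpha)$ is precisely the range for which Lemma~\ref{ABCLm2.2} supplies such a $\phi$ (here through $\mathbb{E}_{\alpha,1}^{2}\geq\mathbb{E}_\alpha$ and the bound $^{ABC}{_0\mathcal{D}}_\tau^\alpha\phi\geq\frac{B(\alpha)}{1-\alpha}\phi$); once $\phi$ is in hand, the rest is the standard perturbation-and-limit scheme built on Theorem~\ref{ABCThm3.5}.
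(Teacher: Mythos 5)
Your proposal is correct and follows essentially the same route as the paper: perturb $w$ by $\epsilon\,\mathbb{E}_\alpha(\cdot)$, use Lemma \ref{ABCLm2.2} together with the termwise bound $\mathbb{E}^2_{\alpha,1}\geq\mathbb{E}_\alpha$ and the one-sided Lipschitz condition with $L<B(\alpha)/(1-\alpha)$ to get the strict differential inequality for $w_\epsilon$, then invoke Theorem \ref{ABCThm3.5} and let $\epsilon\downarrow 0$. The only cosmetic difference is your extra parameter $\lambda$ (the paper takes $\lambda=1$), which changes nothing.
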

\begin{proof}
For $\epsilon>0$, we define
\begin{equation}\label{ABCe3.3}
w_\epsilon(\tau)=w(\tau)+\epsilon\mathbb{E}_{\alpha}(\tau^\alpha),\; \tau\in J.
\end{equation}
By choice of $w$ and  Lemma \ref {ABCLm2.2},  the function $ w_\epsilon$ is differentiable on $J$ such that $^{ABC}{_0\mathcal{D}}_\tau^\alpha w_\epsilon\in C(J)$ 
and 
$$
w_\epsilon(0)=w_0+\epsilon> w(0).
$$
Since $w_\epsilon(\tau)\geq w(\tau),\; \tau\in J$, by using  Lipschitz condition on $f$, we have
$$
f(\tau,w_\epsilon(\tau)) -f(\tau,w(\tau))\leq L(w_\epsilon(\tau)-w(\tau))=L\epsilon\mathbb{E}_{\alpha}(\tau^\alpha).
$$
Using the condition on $L$, we have 
\begin{equation}\label{ABCe3.4}
f(\tau,w(\tau))\geq f(\tau,w_\epsilon(\tau))-L\epsilon\mathbb{E}_{\alpha}(\tau^\alpha)>f(\tau,w_\epsilon(\tau))-\dfrac{B(\alpha)}{1-\alpha}\epsilon\mathbb{E}_{\alpha}(\tau^\alpha),\; \tau\in J.
\end{equation}
By Lemma \ref{ABCLm2.2}, we find 
\begin{equation}\label{ABCe3.5}
^{ABC}{_0\mathcal{D}}_\tau^\alpha\left( \mathbb{E}_{\alpha}(\tau^\alpha)\right) =\;^{ABC}{_0\mathcal{D}}_\tau^\alpha\left( \mathbb{E}^1_{\alpha,1}(\tau^\alpha)\right) = \dfrac{B(\alpha)}{1-\alpha}\mathbb{E}^2_{\alpha,1}(\tau^\alpha),\;\tau\in J.
\end{equation}

\noindent Since $(2)_0=1$ and   $\dfrac{(2)_k}{k!}=k+1>1,\;k=1,2,\cdots$,  we have
\begin{equation}\label{ABCe3.6}
\mathbb{E}^2_{\alpha,1}(\tau^\alpha)= \sum_{k=0}^{\infty}\dfrac{z^k}{\Gamma(\alpha k+1)}\dfrac{(2)_k}{k!}\geq\sum_{k=0}^{\infty}\dfrac{z^k}{\Gamma(\alpha k+1)}=\mathbb{E}_{\alpha}(\tau^\alpha),\; \tau\in J.
\end{equation}
Using the  inequality \eqref{ABCe3.6}, Eq.\eqref{ABCe3.5} reduces to
\begin{equation}\label{e3.7}
^{ABC}{_0\mathcal{D}}_\tau^\alpha\left( \mathbb{E}_{\alpha}(\tau^\alpha)\right) \geq \dfrac{B(\alpha)}{1-\alpha}\mathbb{E}_{\alpha}(\tau^\alpha),~\tau\in J.
\end{equation}
Utilizing the inequalities (ii), \eqref{ABCe3.4} and \eqref{e3.7}, for any $\tau\in J$, we have
\begin{align*}
^{ABC}{_0\mathcal{D}}_\tau^\alpha \left( w_\epsilon(\tau)\right) &=\;^{ABC}{_0\mathcal{D}}_\tau^\alpha \left[ w(\tau)+\epsilon\mathbb{E}_{\alpha}(\tau^\alpha)\right]\\
&=\;^{ABC}{_0\mathcal{D}}_t^\alpha  w(\tau)+\epsilon\;^{ABC}{_0\mathcal{D}}_\tau^\alpha\mathbb{E}_{\alpha}(\tau^\alpha)\\
&\geq f(\tau,w(\tau)) +\epsilon\dfrac{B(\alpha)}{1-\alpha}\mathbb{E}_{\alpha}(\tau^\alpha)\\
&> f(\tau,w_\epsilon(\tau))-\dfrac{B(\alpha)}{1-\alpha}\epsilon\mathbb{E}_{\alpha}(\tau^\alpha)+\epsilon\dfrac{B(\alpha)}{1-\alpha}\mathbb{E}_{\alpha}(\tau^\alpha)\\
& =f(\tau,w_\epsilon(\tau))
\end{align*}
Therefore, 
$$
^{ABC}{_0\mathcal{D}}_\tau^\alpha w_\epsilon(\tau)>  f\left( \tau, w_\epsilon(\tau)\right),\; \tau\in J.
$$
Since $v(0)<w_\epsilon(0)$, by application of Theorem  \ref{ABCThm3.1} with $w(\tau)=w_\epsilon(\tau)$, for each $\epsilon>0$ we have
$$
v(\tau)<w_\epsilon(\tau),\; \tau\in J.
$$
Taking limit as $\epsilon\to 0$, in the above inequality and utilizing Eq. \eqref{ABCe3.3}, we obtain
$$
v(\tau)\leq w(\tau),\; \tau\in J.
$$
\end{proof}
\begin{cor}
If $m$ is any differentiable function defined on $J$ such that $^{ABC}{_0\mathcal{D}}_\tau^\alpha m\in C(J)$, $\tau\in J$  and
\begin{align*}
^{ABC}{_0\mathcal{D}}_\tau^\alpha m(\tau)&\leq \dfrac{B(\alpha)}{1-\alpha}m(\tau),\;\tau\in J,\\ m(0)&=m_0,
\end{align*} then $m(\tau)\leq m_0\mathbb{E}_{\alpha}(\tau^\alpha),\; \tau\in J.$
\end{cor}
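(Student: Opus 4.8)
The plan is to read the statement as an instance of the comparison principle of Section~4 with the linear right-hand side $f(\tau,\omega)=\tfrac{B(\alpha)}{1-\alpha}\,\omega$, which is continuous on $J\times\mathbb{R}$. The hypothesis $^{ABC}{_0\mathcal{D}}_\tau^\alpha m(\tau)\le\tfrac{B(\alpha)}{1-\alpha}m(\tau)$ says precisely that $v:=m$ satisfies condition (i) of Theorem~\ref{ABCThm3.5}, and the obvious candidate for the super-solution is $w(\tau)=m_0\,\mathbb{E}_\alpha(\tau^\alpha)$, for which $w(0)=m_0=m(0)$. Since $\mathbb{E}_\alpha(\tau^\alpha)=\mathbb{E}^1_{\alpha,1}(\tau^\alpha)$, Lemma~\ref{ABCLm2.2} (taking $\beta=\sigma=\lambda=1$) gives $^{ABC}{_0\mathcal{D}}_\tau^\alpha\mathbb{E}_\alpha(\tau^\alpha)=\tfrac{B(\alpha)}{1-\alpha}\mathbb{E}^2_{\alpha,1}(\tau^\alpha)$, which is exactly \eqref{ABCe3.5}.

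First I would not compare $m$ with $w$ directly, because Theorem~\ref{ABCThm3.5} demands a \emph{strict} initial inequality and one strict differential inequality, whereas here $v(0)=w(0)$; instead I would introduce, exactly as in the proof of Theorem~\ref{ABCThm3.6}, the perturbation
$$
w_\epsilon(\tau)=(m_0+\epsilon)\,\mathbb{E}_\alpha(\tau^\alpha),\qquad \epsilon>0 .
$$
Then $w_\epsilon$ meets the regularity requirements of Theorem~\ref{ABCThm3.5}, $w_\epsilon(0)=m_0+\epsilon>m(0)$, and, combining the closed form from Lemma~\ref{ABCLm2.2} with the term-by-term estimate $\mathbb{E}^2_{\alpha,1}(\tau^\alpha)\ge\mathbb{E}_\alpha(\tau^\alpha)$ of \eqref{ABCe3.6}, one obtains
$$
^{ABC}{_0\mathcal{D}}_\tau^\alpha w_\epsilon(\tau)=\frac{B(\alpha)}{1-\alpha}(m_0+\epsilon)\,\mathbb{E}^2_{\alpha,1}(\tau^\alpha)\ \ge\ \frac{B(\alpha)}{1-\alpha}(m_0+\epsilon)\,\mathbb{E}_\alpha(\tau^\alpha)=f(\tau,w_\epsilon(\tau)),
$$
with strict inequality for $\tau\in(0,T]$. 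Thus $w_\epsilon$ is a strict super-solution with $w_\epsilon(0)>v(0)$, so Theorem~\ref{ABCThm3.5} applies and yields $m(\tau)<w_\epsilon(\tau)=(m_0+\epsilon)\mathbb{E}_\alpha(\tau^\alpha)$ on $J$ for every $\epsilon>0$. Letting $\epsilon\to0^{+}$ gives the asserted bound $m(\tau)\le m_0\,\mathbb{E}_\alpha(\tau^\alpha)$.

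The only genuine obstacle is verifying that $w_\epsilon$ is a strict super-solution, and this rests entirely on two facts already established in the paper: the explicit derivative $^{ABC}{_0\mathcal{D}}_\tau^\alpha\mathbb{E}_\alpha(\tau^\alpha)=\tfrac{B(\alpha)}{1-\alpha}\mathbb{E}^2_{\alpha,1}(\tau^\alpha)$ and the comparison $\mathbb{E}^2_{\alpha,1}(\tau^\alpha)\ge\mathbb{E}_\alpha(\tau^\alpha)$ (which comes from $(2)_k/k!=k+1\ge1$). The one point to watch is the sign of the factor $m_0+\epsilon$ that multiplies $\mathbb{E}^2_{\alpha,1}-\mathbb{E}_\alpha$ in the chain above; this is harmless when $m_0\ge0$, and, since $\mathbb{E}_\alpha(\tau^\alpha)\ge1>0$, the same sign also secures $w_\epsilon(0)>m_0$. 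Everything else — continuity of $^{ABC}{_0\mathcal{D}}_\tau^\alpha w_\epsilon$ through the formula of Lemma~\ref{ABCLm2.2}, and the passage to the limit $\epsilon\to0$ — is routine.
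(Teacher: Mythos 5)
Your proof is correct in substance and lands on the same comparison function as the paper, namely $w(\tau)=m_0\,\mathbb{E}_\alpha(\tau^\alpha)$, but you take a slightly different and in fact more careful route. The paper simply verifies $^{ABC}{_0\mathcal{D}}_\tau^\alpha w(\tau)\geq \frac{B(\alpha)}{1-\alpha}w(\tau)$ via \eqref{e3.7} and then invokes Theorem \ref{ABCThm3.6} with $v=m$, $w=\lambda$; you instead unfold the $\epsilon$-perturbation inside Theorem \ref{ABCThm3.6}'s proof, work with $w_\epsilon=(m_0+\epsilon)\mathbb{E}_\alpha(\tau^\alpha)$, and apply the strict comparison Theorem \ref{ABCThm3.5} before letting $\epsilon\to0$. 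This buys you something real: the linear right-hand side $f(\tau,\omega)=\frac{B(\alpha)}{1-\alpha}\omega$ has one-sided Lipschitz constant exactly $L=\frac{B(\alpha)}{1-\alpha}$, which violates the strict hypothesis $L<\frac{B(\alpha)}{1-\alpha}$ of Theorem \ref{ABCThm3.6}, so the paper's citation is not literally applicable; your argument replaces that source of strictness by $\mathbb{E}^2_{\alpha,1}(\tau^\alpha)>\mathbb{E}_\alpha(\tau^\alpha)$ for $\tau>0$ (from \eqref{ABCe3.6}), which does the job since the proof of Theorem \ref{ABCThm3.5} only needs strictness at the first touching point $\tau_0>0$. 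Both arguments, however, share the same unstated restriction: multiplying \eqref{e3.7} (or the inequality $\mathbb{E}^2_{\alpha,1}\geq\mathbb{E}_\alpha$) by $m_0$, respectively $m_0+\epsilon$, preserves its direction only when that factor is nonnegative, so the corollary as proved really requires $m_0\geq0$; you flag this, the paper does not, but neither of you handles $m_0<0$.
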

\begin{proof}
\noindent Define
$$
\lambda(\tau)=m_0\mathbb{E}_{\alpha}(\tau^\alpha),\; \tau\in J.
$$

\noindent Then $\lambda(0)=m_0$. Further using the inequality \eqref{e3.7}, we have 
$$
^{ABC}{_0\mathcal{D}}_\tau^\alpha \lambda(\tau)\geq\dfrac{B(\alpha)}{1-\alpha}\lambda(\tau),\; \tau\in J. 
$$
Applying Theorem \ref{ABCThm3.6}, with $v=m$ and $w=\lambda$, we get
$$
m(\tau)\leq \lambda(\tau)= m_0\mathbb{E}_{\alpha}(\tau^\alpha),\tau\in J.
$$
\end{proof} 

The following theorem is the comparison result for periodic boundary value problems involving ABC-fractional derivative. The proof of the same one can finish watching the comparable kind of steps of Theorem 2.6 \cite{Vats}.  
\begin{theorem}
Let  $v,w$ be any differentiable functions on $J$ such that $^{ABC}{_0\mathcal{D}}_\tau^\alpha\; v,^{ABC}{_0\mathcal{D}}_\tau^\alpha w\in C(J),\;\tau\in J$, and $f\in C(J\times\mathbb{R},\mathbb{R})$ satisfying
\item[\normalfont(i)] $^{ABC}{_0\mathcal{D}}_\tau^\alpha v(\tau)\leq  f\left( \tau, v(\tau)\right),\; \tau\in J,\; v(0)\leq v(T)$
\item[\normalfont(ii)] $^{ABC}{_0\mathcal{D}}_\tau^\alpha w(\tau)\geq  f\left( \tau, w(\tau)\right),\; \tau\in J,\; w(0)\geq w(T)$

\noindent  If the function $f(\tau,\omega) $ is non increasing in $\omega$ for each $\tau$ then
$$
v(\tau)\leq w(\tau),\; \tau\in J.
$$
\end{theorem}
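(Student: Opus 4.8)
The plan is to argue by contradiction, substituting the extreme‑point estimate of Theorem~\ref{ABCThm3.1} for the strict differential inequality that drove Theorem~\ref{ABCThm3.5}, and to use the periodic‑type boundary data only to move the point of contact away from $\tau=0$. Suppose the conclusion fails, so that $M:=\max_{\tau\in J}\big(v(\tau)-w(\tau)\big)>0$; since $v-w$ is continuous on the compact interval $J$, this maximum is attained, say at $\tau_1\in J$. If $\tau_1>0$, put $\tau_0=\tau_1$. If $\tau_1=0$, then $v(0)-w(0)=M$, and the boundary relations $v(0)\le v(T)$ and $w(T)\le w(0)$ give
\[
v(T)-w(T)\;\ge\;v(0)-w(T)\;\ge\;v(0)-w(0)\;=\;M,
\]
so that $v(T)-w(T)=M$ and we may put $\tau_0=T$. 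In either case $\tau_0\in(0,T]$, $v(\tau_0)-w(\tau_0)=M$, and $v(\tau)-w(\tau)\le M$ for all $\tau\in[0,\tau_0)$. This reduction is the only place the periodic‑type conditions enter.

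Next set $m(\tau)=v(\tau)-w(\tau)-M$ on $J$. Since ${}^{ABC}{_0\mathcal{D}}_\tau^\alpha$ is linear and annihilates constants, $m$ is differentiable with ${}^{ABC}{_0\mathcal{D}}_\tau^\alpha m = {}^{ABC}{_0\mathcal{D}}_\tau^\alpha v - {}^{ABC}{_0\mathcal{D}}_\tau^\alpha w \in C(J)$, while $m(\tau_0)=0$ and $m(\tau)\le0$ on $[0,\tau_0)$. Theorem~\ref{ABCThm3.1} then yields ${}^{ABC}{_0\mathcal{D}}_\tau^\alpha m(\tau_0)\ge0$, i.e. ${}^{ABC}{_0\mathcal{D}}_\tau^\alpha v(\tau_0)\ge{}^{ABC}{_0\mathcal{D}}_\tau^\alpha w(\tau_0)$. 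Combining this with hypotheses (i) and (ii) evaluated at $\tau_0$ gives
\[
f(\tau_0,v(\tau_0))\;\ge\;{}^{ABC}{_0\mathcal{D}}_\tau^\alpha v(\tau_0)\;\ge\;{}^{ABC}{_0\mathcal{D}}_\tau^\alpha w(\tau_0)\;\ge\;f(\tau_0,w(\tau_0)),
\]
hence $f(\tau_0,v(\tau_0))\ge f(\tau_0,w(\tau_0))$. On the other hand $M>0$ forces $v(\tau_0)>w(\tau_0)$, so monotonicity of $f(\tau_0,\cdot)$ gives the reverse inequality; turning this opposition into a genuine contradiction (which then shows $M\le0$, i.e. $v\le w$ on $J$) is the heart of the matter.

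The step I expect to be the main obstacle is precisely that last one. With $f(\tau,\cdot)$ only \emph{weakly} decreasing and the inequalities in (i),(ii) only weak, the displayed chain collapses to a string of equalities and no contradiction follows, so strictness must be imported. In the non‑periodic results this was supplied by the Mittag–Leffler perturbation $w_\epsilon(\tau)=w(\tau)+\epsilon\,\mathbb{E}_\alpha(\tau^\alpha)$ of Lemma~\ref{ABCLm2.2} used in Theorem~\ref{ABCThm3.6}; but that device is not admissible here, because the reconstruction identity ${}^{AB}{_0}I_\tau^\alpha\big({}^{ABC}{_0\mathcal{D}}_\tau^\alpha\phi\big)=\phi(\tau)-\phi(0)$ shows that any $\phi$ with ${}^{ABC}{_0\mathcal{D}}_\tau^\alpha\phi>0$ on $J$ must satisfy $\phi(T)>\phi(0)$, so a perturbation that makes inequality (ii) strict necessarily destroys $w_\epsilon(0)\ge w_\epsilon(T)$. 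Following the scheme of Theorem~2.6 of \cite{Vats}, the strictness therefore has to come from the hypotheses themselves — most cleanly by taking $f(\tau,\cdot)$ strictly decreasing, so that $v(\tau_0)>w(\tau_0)$ yields $f(\tau_0,v(\tau_0))<f(\tau_0,w(\tau_0))$ outright and the two preceding paragraphs finish the argument verbatim (alternatively, one may assume one of (i),(ii) strict). The companion comparison for minima would proceed identically, using Theorem~\ref{ABCThm3.2} in place of Theorem~\ref{ABCThm3.1}.
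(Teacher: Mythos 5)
The paper does not actually supply a proof of this theorem; it only points to Theorem~2.6 of \cite{Vats}. Your reduction is the natural (and correct) skeleton for such a proof: locating the maximum of $v-w$, using the boundary data $v(0)\le v(T)$, $w(T)\le w(0)$ to relocate the contact point into $(0,T]$ when the maximum occurs at $\tau=0$, and then invoking Theorem~\ref{ABCThm3.1} on $m=v-w-M$ to obtain ${}^{ABC}{_0\mathcal{D}}_\tau^\alpha v(\tau_0)\ge{}^{ABC}{_0\mathcal{D}}_\tau^\alpha w(\tau_0)$ and hence $f(\tau_0,v(\tau_0))\ge f(\tau_0,w(\tau_0))$. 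Your observation that the Mittag--Leffler perturbation of Theorem~\ref{ABCThm3.6} is unavailable here (because making (ii) strict forces $w_\epsilon(T)>w_\epsilon(0)$ via the inversion formula) is also accurate and is exactly why the periodic case cannot be handled by the $\epsilon$-shift device.

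The obstacle you flag at the end is not a defect of your argument but of the statement itself: with $f$ only weakly non-increasing and the inequalities in (i)--(ii) weak, the conclusion is simply false. Take $f\equiv 0$, $v\equiv 1$, $w\equiv 0$: then ${}^{ABC}{_0\mathcal{D}}_\tau^\alpha v={}^{ABC}{_0\mathcal{D}}_\tau^\alpha w=0$, all four hypotheses hold with equality, $f$ is (weakly) non-increasing, yet $v>w$ everywhere. So no proof can close the chain of inequalities as written, and your diagnosis --- that strictness must be imported, either by assuming $f(\tau,\cdot)$ strictly decreasing (so $v(\tau_0)>w(\tau_0)$ gives $f(\tau_0,v(\tau_0))<f(\tau_0,w(\tau_0))$, contradicting the chain) or by making one of (i), (ii) strict --- is the correct repair and completes the argument. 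You should state the strengthened hypothesis explicitly rather than leave the theorem in its current (unprovable) form.
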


In the next Theorem, we will give an alternating proof of Lemma 2.3 \cite{Moh} without utilizing  Cauchy-Schwartz inequality.

\begin{theorem}\label{ABCThm3.10}
If $m$ is any differentiable function defined on $J$ such that $^{ABC}{_0\mathcal{D}}_\tau^\alpha m\in C(J)$ then $^{ABC}{_0\mathcal{D}}_\tau^\alpha m(0)=0$. 
\end{theorem}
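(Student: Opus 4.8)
The plan is to read off the value of the ABC derivative at the left endpoint from the integration-by-parts representation already derived in the proof of Theorem \ref{ABCThm3.1}. Recall that for every $\tau\in J$,
$$
^{ABC}{_0\mathcal{D}}_\tau^\alpha m(\tau)=\dfrac{B(\alpha)}{1-\alpha}\left\{ m(\tau)-\mathbb{E}_\alpha\!\left( -\dfrac{\alpha}{1-\alpha}\tau^\alpha\right) m(0)-\int_{0}^{\tau}\left( \dfrac{d}{d\sigma}\mathbb{E}_\alpha\!\left( -\dfrac{\alpha}{1-\alpha}(\tau-\sigma)^\alpha\right)\right) m(\sigma)\,d\sigma\right\}.
$$
Since $^{ABC}{_0\mathcal{D}}_\tau^\alpha m\in C(J)$, its value at $\tau=0$ equals $\lim_{\tau\to 0^+}{}^{ABC}{_0\mathcal{D}}_\tau^\alpha m(\tau)$, so it suffices to pass to the limit in the displayed identity.

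I would handle the three terms in the braces separately. By continuity of $m$, $m(\tau)\to m(0)$, and since $\mathbb{E}_\alpha(0)=1$ we also have $\mathbb{E}_\alpha\!\left(-\tfrac{\alpha}{1-\alpha}\tau^\alpha\right)m(0)\to m(0)$, so the first two terms cancel in the limit. For the integral term I would invoke two facts already in the excerpt: the monotonicity $\dfrac{d}{d\sigma}\mathbb{E}_\alpha\!\left(-\tfrac{\alpha}{1-\alpha}(\tau-\sigma)^\alpha\right)\ge 0$ for $\sigma\in[0,\tau)$ used in the proof of Theorem \ref{ABCThm3.1}, and the telescoping
$$
\int_{0}^{\tau}\dfrac{d}{d\sigma}\mathbb{E}_\alpha\!\left( -\dfrac{\alpha}{1-\alpha}(\tau-\sigma)^\alpha\right)d\sigma=\mathbb{E}_\alpha(0)-\mathbb{E}_\alpha\!\left( -\dfrac{\alpha}{1-\alpha}\tau^\alpha\right)=1-\mathbb{E}_\alpha\!\left( -\dfrac{\alpha}{1-\alpha}\tau^\alpha\right).
$$
Setting $M_\tau=\max_{\sigma\in[0,\tau]}|m(\sigma)|<\infty$ and using nonnegativity of the kernel, the integral term is bounded in absolute value by $M_\tau\bigl(1-\mathbb{E}_\alpha(-\tfrac{\alpha}{1-\alpha}\tau^\alpha)\bigr)\to |m(0)|\cdot 0=0$ as $\tau\to 0^+$. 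Hence $\lim_{\tau\to 0^+}{}^{ABC}{_0\mathcal{D}}_\tau^\alpha m(\tau)=\dfrac{B(\alpha)}{1-\alpha}\bigl\{m(0)-m(0)-0\bigr\}=0$, and by continuity $^{ABC}{_0\mathcal{D}}_\tau^\alpha m(0)=0$.

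The only non-routine point is the estimate on the integral term, and it is dispatched cleanly by the sign and telescoping properties of the Mittag--Leffler kernel together with the boundedness of $m$ near the origin; in particular no Cauchy--Schwarz or H\"older-type estimate is needed, which is exactly the simplification over Lemma 2.3 of \cite{Moh} that is being claimed. (One may also note the even quicker heuristic that in the original defining integral $\int_0^\tau \mathbb{E}_\alpha(\cdots)m'(\sigma)\,d\sigma$ the domain collapses to a point at $\tau=0$; the limit argument above is the version that also certifies the value of the continuous extension.)
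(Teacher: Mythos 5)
Your proof is correct, but it follows a genuinely different route from the paper's. The paper works directly from the definition: it sets $M=\sup_{\tau\in J}|m'(\tau)|$ and bounds
$|{}^{ABC}{_0\mathcal{D}}_\tau^\alpha m(\tau)|\le \frac{MB(\alpha)}{1-\alpha}\,\mathbb{E}_{\alpha,2}\bigl(\frac{\alpha}{1-\alpha}\tau^\alpha\bigr)\tau$,
which is $O(\tau)$ and hence vanishes at $\tau=0$ without any limiting argument. You instead reuse the integration-by-parts identity from Theorem \ref{ABCThm3.1}, cancel the boundary terms by continuity of $m$ and $\mathbb{E}_\alpha(0)=1$, and kill the remaining integral via the sign and telescoping properties of $\frac{d}{d\sigma}\mathbb{E}_\alpha\bigl(-\frac{\alpha}{1-\alpha}(\tau-\sigma)^\alpha\bigr)$, then invoke the assumed continuity of ${}^{ABC}{_0\mathcal{D}}_\tau^\alpha m$ to identify the value at $0$ with the limit. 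The trade-off is instructive: the paper's estimate is shorter but quietly assumes $m'$ is continuous (hence bounded) on $J$, which is not literally among the hypotheses (only differentiability of $m$ is assumed); your argument needs only boundedness of $m$ near the origin, which is automatic, at the price of leaning on the continuity of the ABC derivative at $\tau=0$ (which \emph{is} a stated hypothesis) and on the validity of the integration-by-parts formula already used in Theorem \ref{ABCThm3.1}. Both are legitimate; yours is arguably tighter on hypotheses, and your closing remark that the defining integral collapses at $\tau=0$ is essentially the content of the paper's own one-line conclusion.
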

\begin{proof}
Let $m$ is any differentiable function defined on $J$ such that $^{ABC}{_0\mathcal{D}}_\tau^\alpha m\in C(J)$,\; $\tau\in J$.  Since $m'$ exists and is continuous on $J$, we have $m,m'\in C(J)$, this implies $m,m'\in L^1(J)$,  therefore
 $\underset{\tau\in[0,T]}{\sup}|m'(\tau)|\leq M$.
 Using the definition of ABC-fractional derivative operator,
 \begin{align*}
 |^{ABC}{_0\mathcal{D}}_\tau^\alpha m(\tau)|&\leq  \dfrac{B(\alpha)}{1-\alpha}\int_{0}^\tau\left| \mathbb{E}_\alpha\left(-\dfrac{\alpha}{1-\alpha} (\tau-\sigma)^\alpha\right)\right||m'(\sigma)|d\sigma\\
 &\leq\dfrac{MB(\alpha)}{1-\alpha}\int_{0}^\tau \mathbb{E}_\alpha\left(\dfrac{\alpha}{1-\alpha} (\tau-\sigma)^\alpha\right)d\sigma = \dfrac{MB(\alpha)}{1-\alpha}\mathbb{E}_{\alpha,2}\left(\dfrac{\alpha}{1-\alpha} \tau^\alpha\right)\tau
 \end{align*}
This gives,  $^{ABC}{_0\mathcal{D}}_\tau^\alpha m(0)=0$.
\end{proof}

\begin{cor}\label{cor1}
If $\omega,~^{ABC}{_0\mathcal{D}}_\tau^\alpha \omega,~u\in C(J)$ and $^{ABC}{_0\mathcal{D}}_\tau^\alpha \omega(\tau)=u(\tau)$, then $u(0)=0$.
\end{cor}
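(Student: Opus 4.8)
The plan is to obtain this as an immediate consequence of Theorem \ref{ABCThm3.10}. First I would note that $\omega$ is eligible to play the role of the function $m$ in that theorem: by the standing regularity assumptions attached to \eqref{ABCe1.1}--\eqref{ABCe1.2} (indeed, already by the definition of the ABC-fractional derivative, which is formulated for $\omega\in H^1(0,T)$) the function $\omega$ is differentiable on $J$, and we are given that $^{ABC}{_0\mathcal{D}}_\tau^\alpha \omega\in C(J)$. Thus $\omega$ satisfies every hypothesis needed to apply Theorem \ref{ABCThm3.10}.

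Applying Theorem \ref{ABCThm3.10} with $m=\omega$ then yields $^{ABC}{_0\mathcal{D}}_\tau^\alpha \omega(0)=0$. Since the identity $^{ABC}{_0\mathcal{D}}_\tau^\alpha \omega(\tau)=u(\tau)$ is assumed to hold for every $\tau\in J$, evaluating at $\tau=0$ gives
$$
u(0)=\,^{ABC}{_0\mathcal{D}}_\tau^\alpha \omega(0)=0,
$$
which is the claimed conclusion.

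There is essentially no obstacle here; the only point worth a word of care is confirming that $\omega$ meets the differentiability requirement of Theorem \ref{ABCThm3.10}, and this is built into the function class in which the problem is posed. If one preferred a self-contained argument, one could simply repeat the estimate in the proof of Theorem \ref{ABCThm3.10} for $\omega$ directly: with $M=\sup_{\tau\in J}|\omega'(\tau)|<\infty$ one bounds $|u(\tau)|=|^{ABC}{_0\mathcal{D}}_\tau^\alpha \omega(\tau)|\leq \dfrac{MB(\alpha)}{1-\alpha}\,\mathbb{E}_{\alpha,2}\!\left(\dfrac{\alpha}{1-\alpha}\tau^\alpha\right)\tau$, and letting $\tau\to 0^{+}$ together with the continuity of $u$ at $0$ forces $u(0)=0$. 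Invoking Theorem \ref{ABCThm3.10} is, however, the cleanest route.
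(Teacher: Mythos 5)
Your proof is correct and follows the same route as the paper, which simply states that the corollary follows from Theorem \ref{ABCThm3.10}; you apply that theorem with $m=\omega$ and evaluate the identity at $\tau=0$. You are in fact slightly more careful than the paper in flagging that the hypothesis $\omega\in C(J)$ must be read together with the differentiability implicit in the definition of the ABC-derivative for Theorem \ref{ABCThm3.10} to apply.
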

\begin{proof}
Proof follows from the Theorem \ref{ABCThm3.10}.
\end{proof}
\begin{rem}
From the {\normalfont Corollary \ref{cor1}} it follows that {\normalfont the ABC-FDEs }\eqref{ABCe1.1}--\eqref{ABCe1.2} and its equivalent fractional integral equation is consistent only if $f(0,\omega(0))=0$, where $\omega$ is differentiable function  with $^{ABC}{_0\mathcal{D}}_\tau^\alpha \omega\in C(J)$.
\end{rem}

\section{ Existence of Local and Extremal Solution}
In this section, we determine the results about the existence of local and extremal solutions of the ABC-FDEs \eqref{ABCe1.1}--\eqref{ABCe1.2} through the equivalent fractional integral equation given in the following Lemma.

\begin{Lemma}{\normalfont\cite{Ravi1,Syam}}\label{ABCLm4.1}
The equivalent fractional integral equation to the { \normalfont the  ABC-FDEs } \eqref{ABCe1.1}--\eqref{ABCe1.2}  is given by
\begin{equation*}
\omega(\tau)=\omega_0+\dfrac{1-\alpha}{B(\alpha)}  f(\tau,\omega(\tau)) +\dfrac{\alpha}{B(\alpha)\Gamma(\alpha)}\int_{0}^\tau(\tau-\sigma)^{\alpha-1}f(\sigma,\omega(\sigma))d\sigma.
\end{equation*}
\end{Lemma}

Let  $\epsilon>0$ be arbitrary. Consider the ABC-FDEs of the form,
\begin{align}
^{ABC}{_0\mathcal{D}}_\tau^\alpha \omega_\epsilon(\tau)&= f\left( \tau, \omega_\epsilon(\tau)\right), \tau\in J,\label{ABCe4.1}\\
\;\omega_\epsilon(\tau)|_{\tau=0}&=\omega_\epsilon(0),\label{ABCe4.2}
\end{align}
where  $\omega_\epsilon,\;^{ABC}{_0\mathcal{D}}_\tau^\alpha \omega_\epsilon\in C(J)$ and $f\in C(J\times \mathbb{R},\mathbb{R})$.  Then by Lemma \ref{ABCLm4.1}, the equivalent fractional integral equation of the  ABC-FDEs \eqref{ABCe4.1}-\eqref{ABCe4.2}  is given by 
\begin{equation*}
\omega_\epsilon(\tau)=\omega_\epsilon(0)+\dfrac{1-\alpha}{B(\alpha)}  f(\tau,\omega_\epsilon(\tau)) +\dfrac{\alpha}{B(\alpha)\Gamma(\alpha)}\int_{0}^\tau(\tau-\sigma)^{\alpha-1}f(\sigma,\omega_\epsilon(\sigma))d\sigma,\;\tau\in J.
\end{equation*}

\begin{theorem}\label{ABCThm4.2}
If the function $f\in C(R_0,\mathbb{R}), ~ R_0=\left\lbrace (\tau,\omega) : \tau\in J,|\omega-\omega_0|\leq b\right\rbrace$ is such that 
$$|f(\tau,\omega)|\leq M,\;\text{for all}\; (\tau,\omega)\in R_0$$ and satisfies the  Lipschitz type condition, 
$$
|f(\tau_1,\omega)-f(\tau_2,\eta)|\leq L_1|\tau_1-\tau_2|+L_2|\omega-\eta|,\;\tau_1,\tau_2\in J,\;\omega,\eta\in\mathbb{R},
$$
where $L_1>0$ and $ 0<L_2<\dfrac{B(\alpha)}{(1-\alpha)},$
then the family $\left\lbrace \omega_\epsilon\right\rbrace $ of solution of{ \normalfont the ABC-FDEs } \eqref{ABCe4.1}--\eqref{ABCe4.2} is equicontinious on $J$.
\end{theorem}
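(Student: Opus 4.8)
The plan is to work directly with the equivalent fractional integral equation for $\omega_\epsilon$ displayed just before the statement, and to estimate $|\omega_\epsilon(\tau_2)-\omega_\epsilon(\tau_1)|$ for $\tau_1,\tau_2\in J$ (say $\tau_1<\tau_2$) by a quantity that depends only on $|\tau_2-\tau_1|$ and not on $\epsilon$. Subtracting the integral representations at $\tau_2$ and at $\tau_1$, the constant term $\omega_\epsilon(0)$ cancels, and one is left with the difference of the local term $\frac{1-\alpha}{B(\alpha)}f(\tau,\omega_\epsilon(\tau))$ at the two points, together with the difference of the Riemann--Liouville integral term.

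For the local term I would invoke the Lipschitz-type hypothesis on $f$ to bound it by $\frac{1-\alpha}{B(\alpha)}\big(L_1|\tau_2-\tau_1|+L_2|\omega_\epsilon(\tau_2)-\omega_\epsilon(\tau_1)|\big)$. For the integral term I would split $\int_0^{\tau_2}$ as $\int_0^{\tau_1}+\int_{\tau_1}^{\tau_2}$, use the uniform bound $|f|\le M$ on $R_0$ (the solutions $\omega_\epsilon$ under consideration having graph contained in $R_0$), and reduce the matter to the elementary identities $\int_{\tau_1}^{\tau_2}(\tau_2-\sigma)^{\alpha-1}\,d\sigma=\frac1\alpha(\tau_2-\tau_1)^\alpha$ and $\int_0^{\tau_1}\big[(\tau_1-\sigma)^{\alpha-1}-(\tau_2-\sigma)^{\alpha-1}\big]\,d\sigma=\frac1\alpha\big[\tau_1^\alpha-\tau_2^\alpha+(\tau_2-\tau_1)^\alpha\big]$; combined with the subadditivity inequality $\tau_2^\alpha-\tau_1^\alpha\le(\tau_2-\tau_1)^\alpha$ for $0<\alpha<1$, this gives a bound on the integral term of the form $\frac{2M}{B(\alpha)\Gamma(\alpha)}(\tau_2-\tau_1)^\alpha$.

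Collecting the two estimates yields a self-referential inequality for $|\omega_\epsilon(\tau_2)-\omega_\epsilon(\tau_1)|$ in which this quantity reappears on the right with coefficient $\frac{(1-\alpha)L_2}{B(\alpha)}$. Here the hypothesis $0<L_2<\frac{B(\alpha)}{1-\alpha}$ enters decisively: it forces this coefficient to be strictly less than $1$, so the term can be absorbed into the left-hand side, leaving
$$
|\omega_\epsilon(\tau_2)-\omega_\epsilon(\tau_1)|\le\left(1-\frac{(1-\alpha)L_2}{B(\alpha)}\right)^{-1}\left[\frac{(1-\alpha)L_1}{B(\alpha)}|\tau_2-\tau_1|+\frac{2M}{B(\alpha)\Gamma(\alpha)}(\tau_2-\tau_1)^\alpha\right].
$$
The right-hand side is independent of $\epsilon$ and tends to $0$ as $|\tau_2-\tau_1|\to0$; hence, given $\eta>0$, one $\delta>0$ works simultaneously for every $\omega_\epsilon$, which is exactly (uniform) equicontinuity of the family $\{\omega_\epsilon\}$ on $J$.

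The only genuinely delicate point is the integral-term estimate — handling the singular factor $(\tau-\sigma)^{\alpha-1}$ near the upper limit and controlling $\int_0^{\tau_1}|(\tau_2-\sigma)^{\alpha-1}-(\tau_1-\sigma)^{\alpha-1}|\,d\sigma$ — together with the bookkeeping that makes the final constant manifestly $\epsilon$-free; the absorption step itself is routine once one observes that the Lipschitz constant $L_2$ lies below the threshold $B(\alpha)/(1-\alpha)$.
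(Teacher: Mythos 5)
Your proposal is correct and follows essentially the same route as the paper: subtract the two integral representations, bound the local term via the Lipschitz-type condition, split and estimate the singular integral using $|f|\le M$ together with the elementary antiderivative identities, and absorb the $\frac{(1-\alpha)L_2}{B(\alpha)}|\omega_\epsilon(\tau_2)-\omega_\epsilon(\tau_1)|$ term using $L_2<\frac{B(\alpha)}{1-\alpha}$. If anything, your final modulus of continuity $C_1|\tau_2-\tau_1|+C_2(\tau_2-\tau_1)^\alpha$ is slightly more careful than the paper's, which further invokes the inequality $(\tau_2-\tau_1)^\alpha\leq(\tau_2-\tau_1)$ (false when $\tau_2-\tau_1<1$) to obtain a purely linear bound, though this does not affect the equicontinuity conclusion.
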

\begin{proof}
Let  $\epsilon>0$ be arbitrary. Let  $\omega_\epsilon(\tau)$ be the solution of the ABC-FDEs  \eqref{ABCe4.1}--\eqref{ABCe4.2}. Let $\tau_1,\tau_2\in J$ with $0< \tau_1\leq \tau_2< T$. Then by  hypotheses, we have  
\begin{align*}
&|\omega_\epsilon(\tau_1)-\omega_\epsilon(\tau_2)|\\
&=\left|\left( \omega_\epsilon(0)+\dfrac{1-\alpha}{B(\alpha)}  f(\tau_1,\omega_\epsilon(\tau_1)) +\dfrac{\alpha}{B(\alpha)\Gamma(\alpha)}\int_{0}^{\tau_1}(\tau_1-\sigma)^{\alpha-1}f(\sigma,\omega_\epsilon(\sigma))d\sigma\right) \right.\\&\qquad\left.-\left( \omega_\epsilon(0)+\dfrac{1-\alpha}{B(\alpha)}  f(\tau_2,\omega_\epsilon(\tau_2)) +\dfrac{\alpha}{B(\alpha)\Gamma(\alpha)}\int_{0}^{\tau_2}(\tau_2-\sigma)^{\alpha-1}f(\sigma,\omega_\epsilon(\sigma))d\sigma\right) \right| \\
&\leq\dfrac{1-\alpha}{B(\alpha)} |f(\tau_1,\omega_\epsilon(\tau_1))-f(\tau_2,\omega_\epsilon(\tau_2))|\\
&\quad+\dfrac{\alpha}{B(\alpha)\Gamma(\alpha)}\left( \int_{0}^{\tau_1}((\tau_1-\sigma)^{\alpha-1}-(\tau_2-\sigma)^{\alpha-1})|f(\sigma,\omega_\epsilon(\sigma))|d\sigma+\int_{\tau_1}^{\tau_2}(\tau_2-\sigma)^{\alpha-1}|f(\sigma,\omega_\epsilon(\sigma))|d\sigma\right) \\
&\leq \dfrac{1-\alpha}{B(\alpha)}\left(L_1|\tau_1-\tau_2|+L_2 |\omega_\epsilon(\tau_1)-\omega_\epsilon(\tau_2)|\right) +\dfrac{M\alpha}{B(\alpha)\Gamma(\alpha)}\int_{0}^{\tau_1}((\tau_1-\sigma)^{\alpha-1}-(\tau_2-\sigma)^{\alpha-1})d\sigma\\
&\quad+\dfrac{M\alpha}{B(\alpha)\Gamma(\alpha)}\int_{\tau_1}^{\tau_2}(\tau_2-\sigma)^{\alpha-1}d\sigma
\end{align*}
This gives, 
\begin{align*}
|\omega_\epsilon(\tau_1)-\omega_\epsilon(\tau_2)|\leq\dfrac{1}{\left( 1-\dfrac{1-\alpha}{B(\alpha)}L_2\right) }\left[   \dfrac{(1-\alpha)L_1}{B(\alpha)}(\tau_2-\tau_1)+\dfrac{M}{B(\alpha)\Gamma(\alpha)}\left( 2(\tau_2-\tau_1)^\alpha+\tau_1^\alpha-\tau_2^\alpha\right) \right].  
\end{align*}
Note that for any $0<\alpha<1,\;(\tau_2-\tau_1)^\alpha\leq (\tau_2-\tau_1)$ and $\tau_1^\alpha- \tau_2^\alpha\leq 0$. Therefore, we have
$$
|\omega_\epsilon(\tau_1)-\omega_\epsilon(\tau_2)|\leq \dfrac{\Gamma(\alpha)(1-\alpha)L_1+2M}{\Gamma(\alpha)\left\{ B(\alpha)-(1-\alpha)L_2\right\} }(\tau_2-\tau_1).
$$
One can check that for any $\tilde{\epsilon}>0$ there exists $\tilde{\delta}=\tilde{\epsilon}\dfrac{\Gamma(\alpha)\left\{ B(\alpha)-(1-\alpha)L_2\right\} }{\Gamma(\alpha)(1-\alpha)L_1+2M}$ such that if $|\tau_1-\tau_2|<\tilde{\delta}$, then 
$$
|\omega_\epsilon(\tau_1)-\omega_\epsilon(\tau_2)|<\tilde{\epsilon}.
$$
This proves that the family of solution $\left\lbrace \omega_\epsilon \right\rbrace$ of the ABC-FDEs \eqref{ABCe4.1}--\eqref{ABCe4.2} is equicontinious on $J$.
\end{proof}

\begin{theorem}\label{ABCThm4.3}
Assume that the conditions of Theorem \ref{ABCThm4.2} hold. If $M(1-\alpha)<bB(\alpha)$, then the ABC-FDEs \eqref{ABCe1.1}--\eqref{ABCe1.2} has at least one solution on $J'=[0,\beta]$, where $\beta=\min\left\lbrace  T,\left[ \dfrac{\Gamma(\alpha)(bB(\alpha)-M(1-\alpha))}{M} \right]^{\frac{1}{\alpha}} \right\rbrace $. 
\end{theorem}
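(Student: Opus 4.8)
The plan is to recast the problem on $J'=[0,\beta]$ as a fixed point equation for the integral operator furnished by Lemma \ref{ABCLm4.1} and then apply Schauder's fixed point theorem, the abstract form of Peano's theorem. Define, for $\omega\in C(J')$,
$$
(\mathcal{T}\omega)(\tau)=\omega_0+\dfrac{1-\alpha}{B(\alpha)}\,f(\tau,\omega(\tau))+\dfrac{\alpha}{B(\alpha)\Gamma(\alpha)}\int_{0}^{\tau}(\tau-\sigma)^{\alpha-1}f(\sigma,\omega(\sigma))\,d\sigma .
$$
By Lemma \ref{ABCLm4.1}, a function $\omega\in C(J')$ whose graph is contained in $R_0$ solves \eqref{ABCe1.1}--\eqref{ABCe1.2} on $J'$ if and only if $\mathcal{T}\omega=\omega$, so it suffices to produce such a fixed point.

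The set on which I would run the argument is
$$
\mathcal{B}=\left\{\omega\in C(J')\ :\ \|\omega-\omega_0\|_{\infty}\le b\ \text{ and }\ |\omega(\tau_1)-\omega(\tau_2)|\le \Lambda\,|\tau_1-\tau_2|\ \ \forall\,\tau_1,\tau_2\in J'\right\},
$$
with $\Lambda=\dfrac{\Gamma(\alpha)(1-\alpha)L_1+2M}{\Gamma(\alpha)\{B(\alpha)-(1-\alpha)L_2\}}$, the Lipschitz constant obtained in the proof of Theorem \ref{ABCThm4.2}. This $\mathcal{B}$ contains the constant function $\omega_0$, is convex and bounded, and --- being a closed, uniformly bounded, equicontinuous subset of $C(J')$ --- is compact by the Arzel\`a--Ascoli theorem. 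It is also routine that $\mathcal{T}$ is continuous on $\mathcal{B}$: if $\omega_n\to\omega$ uniformly then $f(\cdot,\omega_n(\cdot))\to f(\cdot,\omega(\cdot))$ uniformly by uniform continuity of $f$ on the compact set $R_0$, and this passes through the non-singular term directly and through the Riemann--Liouville term by dominated convergence, the kernel $(\tau-\sigma)^{\alpha-1}$ being integrable.

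The crux is the self-mapping property $\mathcal{T}(\mathcal{B})\subseteq\mathcal{B}$. The modulus-of-continuity condition is obtained by repeating the estimate in the proof of Theorem \ref{ABCThm4.2}, now with an arbitrary $\omega\in\mathcal{B}$ in place of a solution. For the sup-norm condition, since $\omega\in\mathcal{B}$ forces $(\tau,\omega(\tau))\in R_0$ and hence $|f(\tau,\omega(\tau))|\le M$, one gets
$$
|(\mathcal{T}\omega)(\tau)-\omega_0|\ \le\ \dfrac{(1-\alpha)M}{B(\alpha)}+\dfrac{M\,\tau^{\alpha}}{B(\alpha)\Gamma(\alpha)}\ \le\ \dfrac{(1-\alpha)M}{B(\alpha)}+\dfrac{M\,\beta^{\alpha}}{B(\alpha)\Gamma(\alpha)}\ \le\ b ,
$$
the final inequality being precisely the definition of $\beta$; it forces the Riemann--Liouville contribution to be at most $b-\frac{(1-\alpha)M}{B(\alpha)}$, and this leftover budget is positive exactly because $M(1-\alpha)<bB(\alpha)$. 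With $\mathcal{T}:\mathcal{B}\to\mathcal{B}$ continuous and $\mathcal{B}$ compact convex, Schauder's fixed point theorem yields $\omega^{\ast}\in\mathcal{B}$ with $\mathcal{T}\omega^{\ast}=\omega^{\ast}$, and Lemma \ref{ABCLm4.1} identifies $\omega^{\ast}$ as a solution of \eqref{ABCe1.1}--\eqref{ABCe1.2} on $[0,\beta]$.

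The main obstacle is exactly this self-mapping step: one must keep the graph of $\mathcal{T}\omega$ inside $R_0$, where $f$ is controlled, and the construction is calibrated so that the non-singular part of $\mathcal{T}$ already consumes strictly less than $b$ (this is the meaning of the hypothesis $M(1-\alpha)<bB(\alpha)$) while the remaining room is preserved by shrinking the time horizon to length $\beta$; the compactness, convexity and continuity are routine. An equivalent, more hands-on route --- closer to the family $\{\omega_\epsilon\}$ introduced before Theorem \ref{ABCThm4.2} --- would be to solve suitably perturbed problems on $J'$, invoke Theorem \ref{ABCThm4.2} for uniform equicontinuity, extract a uniformly convergent subsequence by Arzel\`a--Ascoli, and pass to the limit in the integral equation; Schauder's theorem simply packages this limiting procedure.
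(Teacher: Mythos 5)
Your proof is correct, but it follows a genuinely different route from the paper. The paper proves Peano-type existence via delay (method-of-steps) approximations: it solves $^{ABC}{_0\mathcal{D}}_\tau^\alpha \omega_\epsilon(\tau)=f(\tau,\omega_\epsilon(\tau-\epsilon))$ explicitly on successive intervals of length $\epsilon$ using a prescribed history $\omega_0(\cdot)$ on $[-\delta,0]$, shows the family $\{\omega_\epsilon\}$ stays in the ball $|\omega_\epsilon-\omega_0|\le b$ by exactly the estimate you use for the self-mapping property, invokes Theorem \ref{ABCThm4.2} for equicontinuity, and passes to the limit along an Arzel\`a--Ascoli subsequence. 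Your Schauder argument packages the same two quantitative inputs --- the radius bound coming from $M(1-\alpha)<bB(\alpha)$ and the definition of $\beta$, and the uniform Lipschitz bound coming from $L_2(1-\alpha)<B(\alpha)$ --- into invariance of a compact convex set; note that your $\Lambda$ is precisely the fixed point of the affine recursion $\Lambda\mapsto \frac{1-\alpha}{B(\alpha)}(L_1+L_2\Lambda)+\frac{2M}{B(\alpha)\Gamma(\alpha)}$, so $\mathcal{T}$ preserves the Lipschitz class with equality, which is worth stating explicitly. What your route buys: it works on all of $J'$ at once, and it avoids the paper's need to choose an initial history satisfying $f(0,\omega_0(-\epsilon))=0$ for every small $\epsilon$; what the paper's route buys is a constructive approximation scheme and direct reuse of Theorem \ref{ABCThm4.2}. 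Two caveats you share with the paper: the step $(\tau_2-\tau_1)^\alpha\le\tau_2-\tau_1$ is valid only when $\tau_2-\tau_1\le 1$ (a H\"older-type modulus $C_1h+C_2h^\alpha$ would be cleaner and costs nothing for equicontinuity), and the fixed point satisfies $\omega^*(0)=\omega_0+\frac{1-\alpha}{B(\alpha)}f(0,\omega^*(0))$, so the initial condition $\omega^*(0)=\omega_0$ holds only under the consistency condition $f(0,\omega_0)=0$ recorded in the paper's remark after Corollary \ref{cor1}.
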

\begin{proof}
Fix $\delta>0$. Let $\omega_0\in C[-\delta,0]$ be any  real valued function  satisfying the conditions
$$
\omega_0(0)=\omega_0,\;|\omega_0(\tau)-\omega_0|\leq b.
$$
For any $\epsilon$,\;  $0<\epsilon<\delta$, define $\beta_1=\min\left\lbrace \beta,\epsilon\right\rbrace $.  Consider the ABC-fractional delay  differential equations
\begin{align}
^{ABC}{_0\mathcal{D}}_\tau^\alpha \omega_\epsilon(\tau)&= f\left( \tau, \omega_\epsilon(\tau-\epsilon)\right), \tau\in [0,\beta_1], \label{ABCe4.3} \\
\omega_\epsilon(\tau)&=\omega_0(\tau),\tau\in[-\delta,0]\label{ABCe4.4}.
\end{align}

\noindent Then by Lemma \ref{ABCLm4.1}, equivalent fractional integral equ. of the ABC-FDEs \eqref{ABCe4.3}--\eqref{ABCe4.4} is 
\begin{equation}\label{ABCe4.5}
\omega_\epsilon(\tau)=\begin{cases}
\omega_0(\tau),\; \tau\in [-\delta,0],\\
\omega_0+\dfrac{1-\alpha}{B(\alpha)}  f(\tau,\omega_\epsilon(\tau-\epsilon)) \\
\;\;+\dfrac{\alpha}{B(\alpha)\Gamma(\alpha)}\large\int_{0}^\tau(\tau-\sigma)^{\alpha-1}f(\sigma,\omega_\epsilon(\sigma-\epsilon))d\sigma,\;\tau\in [0,\beta_1].
\end{cases}
\end{equation}
In the view of Corollary \ref{cor1}, the ABC-FDEs \eqref{ABCe4.3}--\eqref{ABCe4.4} is consistent only if 
\begin{equation}\label{ABCe4.6}
f(0,\omega_\epsilon(-\epsilon))=f(0,\omega_0(-\epsilon))=0
\end{equation}
One can observe that $\omega_\epsilon$ is continuous on $[-\delta,\beta_1]$ expect possibly at $\tau=0$.  Using continuity of $f$ and Eq.\eqref{ABCe4.6}, we have
\begin{align*}
\lim\limits_{\tau\to 0^+}\omega_\epsilon(\tau)&=\lim\limits_{\tau\to 0^+}\left( \omega_0+\dfrac{1-\alpha}{B(\alpha)}  f(\tau,\omega_\epsilon(\tau-\epsilon))
\;+\dfrac{\alpha}{B(\alpha)\Gamma(\alpha)}\large\int_{0}^\tau(\tau-\sigma)^{\alpha-1}f(\sigma,\omega_\epsilon(\sigma-\epsilon))d\sigma\right)\\
&=\omega_0+f(0,\omega_0(-\epsilon))=\omega_0
\end{align*} 
Hence the function  $\omega_\epsilon(\tau):[-\delta,\beta_1]\to\mathbb{R}$ is continuous. Note that,
 \begin{equation}\label{ABCe4.7}
|\omega_\epsilon(\tau)-\omega_0|=|\omega_0(\tau)-\omega_0|\leq b,\;\tau\in [-\delta,0].
\end{equation}
Also for any $\tau\in[0,\beta_1]$,
\begin{align}\label{ABCe4.8}
|\omega_\epsilon(\tau)-\omega_0|&\leq \dfrac{1-\alpha}{B(\alpha)} | f(\tau,\omega_\epsilon(\tau-\epsilon)) |+\dfrac{\alpha}{B(\alpha)\Gamma(\alpha)}\int_{0}^\tau(\tau-\sigma)^{\alpha-1}|f(\sigma,\omega_\epsilon(\sigma-\epsilon))|d\sigma\nonumber\\
&\leq\dfrac{(1-\alpha)M}{B(\alpha)} +\dfrac{M\alpha}{B(\alpha)\Gamma(\alpha)}\int_{0}^\tau(t-\sigma)^{\alpha-1}d\sigma\nonumber\\
&= \dfrac{M}{B(\alpha)}\left(1-\alpha+\dfrac{\tau^\alpha}{\Gamma(\alpha)} \right) \leq\dfrac{M}{B(\alpha)}\left(1-\alpha+\dfrac{\beta_1^\alpha}{\Gamma(\alpha)} \right).
\end{align}  
Since $\beta_1\leq\beta\leq \left[ \dfrac{\Gamma(\alpha)(bB(\alpha)-M(1-\alpha))}{M} \right]^{\frac{1}{\alpha}}$,  we have 
\begin{equation}\label{ABCe4.9}
\dfrac{\beta_1^\alpha}{\Gamma(\alpha)} \leq \dfrac{M}{B(\alpha)}\left(\dfrac{bB(\alpha)}{M}-1+\alpha\right)
\end{equation}
From \eqref{ABCe4.8} and \eqref{ABCe4.9}, we have
\begin{equation}\label{ABCe4.10}
|\omega_\epsilon(\tau)-\omega_0|\leq b, \tau\in[0,\beta_1]
\end{equation}
From \eqref{ABCe4.7} and \eqref{ABCe4.10}, we have
\begin{equation}
|\omega_\epsilon(\tau)-\omega_0|\leq b, \tau\in[-\delta,\beta_1].
\end{equation}
If $\beta_1<\beta$, we consider fractional integral Eq.\eqref{ABCe4.5} on the interval $[-\delta, \beta_2]$, where $\beta_2=\min\{\beta,2\epsilon\}$, such that
$$
|\omega_\epsilon(\tau)-\omega_0|\leq b,\;\tau\in[-\delta,\beta_2].
$$
Continuing in this way, $\omega_\epsilon(\tau) $ can be extended to $[-\delta,\beta]$, such that 
$$
|\omega_\epsilon(\tau)-\omega_0|\leq b,\;\tau\in[-\delta,\beta].
$$
This gives
$$
|\omega_\epsilon(\tau)|\leq |\omega_\epsilon(\tau)-\omega_0|+|\omega_0|\leq b+|\omega_0|,\;\tau\in[-\delta,\beta].
$$
Therefore
$$
\|\omega_\epsilon\|\leq b+|\omega_0|,\;\tau\in[-\delta,\beta], 
$$ 
and hence $\left\lbrace \omega_\epsilon\right\rbrace $ is uniformly bounded family of function defined on $[-\delta,\beta]$. Since, the hypothesis of Lemma \ref{ABCLm4.1} are satisfied, the  family $\left\lbrace \omega_\epsilon \right\rbrace $  is  equicontinious. Applying  Ascoli-Arzela's theorem there exists a decreasing sequence $\left\lbrace \epsilon_n\right\rbrace $  with $\epsilon_n>0$ for all $n$ and $\epsilon_n\to 0$ such that  $\omega_{\epsilon_n}\to \omega$, $\omega\in C([0,\beta],\mathbb{R})$ uniformly on   $[0,\beta]$. Since $f\in C(R_0,\mathbb{R})$, we have  $f(\tau,\omega_{\epsilon_n}(\tau-\epsilon_n))\to f(\tau,\omega)$. By replacing  $\epsilon$ with $\epsilon_n$ in equation \eqref{ABCe4.5} and then taking limit as $n\to\infty$, we obtain
$$
\omega(\tau)=\omega_0+\dfrac{1-\alpha}{B(\alpha)}  f(\tau,\omega(\tau)) +\dfrac{\alpha}{B(\alpha)\Gamma(\alpha)}\int_{0}^\tau(\tau-\sigma)^{\alpha-1}f(s,\omega(s))d\sigma,\;\tau\in [0,\beta],
$$
which gives the required solution of the ABC-FDEs \eqref{ABCe1.1}--\eqref{ABCe1.2}.
\end{proof}

\begin{theorem}\label{ABCThm4.4}
Assume that the conditions of Theorem \ref{ABCThm4.2} hold. If $(2M+b)(1-\alpha)< bB(\alpha)$, then the ABC-FDEs \eqref{ABCe1.1}--\eqref{ABCe1.2} has extremal solution on $J''=[0,\beta_0]$, where $\beta_0=\min\left\lbrace  T,\left[ \dfrac{ (bB(\alpha)-(2M+b)(1-\alpha))\Gamma(\alpha)}{(2M+b)}\right]^\frac{1}{\alpha}\right\rbrace $.
\end{theorem}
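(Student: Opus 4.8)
The plan is to build the maximal solution by a parameter-perturbation argument and then obtain the minimal solution by the mirror construction. For a small parameter $\epsilon>0$ I would consider the perturbed problem $^{ABC}{_0\mathcal{D}}_\tau^\alpha \omega_\epsilon(\tau)= f(\tau,\omega_\epsilon(\tau))+\epsilon$, $\omega_\epsilon(0)=\omega_0+\epsilon$, handled exactly as in Theorem \ref{ABCThm4.3} (that is, through the equivalent integral equation of Lemma \ref{ABCLm4.1}, with the same delay-regularisation if needed). The perturbed forcing $F_\epsilon(\tau,\omega)=f(\tau,\omega)+\epsilon$ is continuous, obeys the same Lipschitz estimate as $f$ with the same constants $L_1,L_2$ (so still $0<L_2<B(\alpha)/(1-\alpha)$), and satisfies $|F_\epsilon|\le M+\epsilon$ on $R_0$. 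Carrying out the a priori estimate from the proof of Theorem \ref{ABCThm4.3} for $F_\epsilon$ together with the shifted initial datum, the role of the constant $M$ is taken over, uniformly over the admissible range of $\epsilon$, by the effective constant $2M+b$; this is exactly why the quantitative hypothesis $(2M+b)(1-\alpha)<bB(\alpha)$ and the length $\beta_0$ of $J''=[0,\beta_0]$ enter the statement. Theorem \ref{ABCThm4.3} (resting on Theorem \ref{ABCThm4.2}) then furnishes, for each such $\epsilon$, a solution $\omega_\epsilon\in C(J'')$ of the perturbed problem on the \emph{common} interval $J''$, and, the Lipschitz constants being $\epsilon$-independent, the family $\{\omega_\epsilon\}$ is uniformly bounded and equicontinuous on $J''$.

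Next I would prove that $\epsilon_2<\epsilon_1$ implies $\omega_{\epsilon_2}(\tau)<\omega_{\epsilon_1}(\tau)$ for all $\tau\in J''$. With $v=\omega_{\epsilon_2}$, $w=\omega_{\epsilon_1}$ and $g(\tau,\omega)=f(\tau,\omega)+\epsilon_1$, one has $^{ABC}{_0\mathcal{D}}_\tau^\alpha v=f(\tau,v)+\epsilon_2<g(\tau,v)$ and $^{ABC}{_0\mathcal{D}}_\tau^\alpha w=g(\tau,w)$, so $v$ and $w$ satisfy the hypotheses (i) and (ii) of Theorem \ref{ABCThm3.5} with $f$ replaced by $g$, the first strictly, while $v(0)=\omega_0+\epsilon_2<\omega_0+\epsilon_1=w(0)$; Theorem \ref{ABCThm3.5} then yields $v<w$ on $J''$. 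Hence $\{\omega_\epsilon\}$ decreases pointwise as $\epsilon\downarrow0$ and is bounded below, so along any sequence $\epsilon_n\downarrow0$ the limit $r(\tau)=\lim_{n\to\infty}\omega_{\epsilon_n}(\tau)$ exists for every $\tau\in J''$; combined with the uniform boundedness and equicontinuity of $\{\omega_{\epsilon_n}\}$, the Ascoli--Arzel\`a theorem (or Dini's theorem, once the limit is known continuous) upgrades this to uniform convergence on $J''$, with $r\in C(J'')$.

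I would then pass to the limit $n\to\infty$ in the integral equation of Lemma \ref{ABCLm4.1} written for $\omega_{\epsilon_n}$, which carries the extra contributions $\epsilon_n$, $\tfrac{1-\alpha}{B(\alpha)}\epsilon_n$ and $\tfrac{\alpha}{B(\alpha)\Gamma(\alpha)}\int_0^\tau(\tau-\sigma)^{\alpha-1}\epsilon_n\,d\sigma$, all tending to $0$. Using the uniform convergence $\omega_{\epsilon_n}\to r$, the continuity of $f$, and dominated convergence for the integrable weakly singular kernel, the limit satisfies $r(\tau)=\omega_0+\tfrac{1-\alpha}{B(\alpha)}f(\tau,r(\tau))+\tfrac{\alpha}{B(\alpha)\Gamma(\alpha)}\int_0^\tau(\tau-\sigma)^{\alpha-1}f(\sigma,r(\sigma))\,d\sigma$, so by Lemma \ref{ABCLm4.1} the function $r$ solves \eqref{ABCe1.1}--\eqref{ABCe1.2} on $J''$. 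To see that $r$ is the maximal solution, I would take an arbitrary solution $\omega$ of \eqref{ABCe1.1}--\eqref{ABCe1.2} on $J''$: since $^{ABC}{_0\mathcal{D}}_\tau^\alpha\omega=f(\tau,\omega)<f(\tau,\omega)+\epsilon_n$ and $\omega(0)=\omega_0<\omega_0+\epsilon_n=\omega_{\epsilon_n}(0)$, Theorem \ref{ABCThm3.5} (with $f$ replaced by $f+\epsilon_n$) gives $\omega<\omega_{\epsilon_n}$ on $J''$, and letting $n\to\infty$ yields $\omega\le r$ on $J''$. The minimal solution is obtained identically, starting from $^{ABC}{_0\mathcal{D}}_\tau^\alpha\omega_\epsilon=f(\tau,\omega_\epsilon)-\epsilon$, $\omega_\epsilon(0)=\omega_0-\epsilon$, with all inequalities reversed.

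The hard part will be the monotonicity step: it requires Theorem \ref{ABCThm3.5} to be applied in the slightly non-obvious form of comparing \emph{both} $\omega_{\epsilon_1}$ and $\omega_{\epsilon_2}$ to the single nonlinearity $f+\epsilon_1$, which is what makes the strict comparison principle genuinely applicable. A second, purely quantitative difficulty --- but the one that dictates the precise hypothesis $(2M+b)(1-\alpha)<bB(\alpha)$ and the value of $\beta_0$ --- is to carry the a priori bound through uniformly in $\epsilon$, so that every perturbed problem is solvable on one and the same interval $[0,\beta_0]$.
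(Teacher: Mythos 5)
Your proposal is correct and follows essentially the same route as the paper: perturb to $f+\epsilon$ with initial value $\omega_0+\epsilon$, solve on a common interval via Theorem \ref{ABCThm4.3}, use the strict comparison principle (Theorem \ref{ABCThm3.5}) to get monotonicity in $\epsilon$, extract a uniform limit by Ascoli--Arzel\`a, and pass to the limit to identify the maximal solution, with the minimal one obtained by the mirrored perturbation. The one detail the paper makes explicit, which you only sketch, is how the uniform-in-$\epsilon$ bound is arranged: it restricts to $0<\epsilon\le b/2$ and works on the sub-rectangle $R_\epsilon=\{|\omega-(\omega_0+\epsilon)|\le b/2\}\subset R_0$, so that $|f_\epsilon|\le M+b/2$ there, which is exactly the source of the effective constant $2M+b$ you identified in the hypothesis and in $\beta_0$.
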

 \begin{proof}
We give the proof only for the existence of maximal solution of the ABC-FDEs \eqref{ABCe1.1}--\eqref{ABCe1.2}, as the proof of existence of minimal solution one can complete on similar lines. 
 
\noindent For $0<\epsilon\leq\frac{b}{2},$ consider 
the ABC-FDEs
\begin{align}
^{ABC}{_0\mathcal{D}}_\tau^\alpha \omega(\tau)&= f\left( \tau, \omega(\tau)\right)+\epsilon:=f_\epsilon\left( \tau, \omega(\tau)\right), \tau\in J, \label{ABCe4.12} \\
\omega(0)&=\omega_0+\epsilon=\omega(0,\epsilon).\label{ABCe4.13}
\end{align}

\noindent Define  $R_\epsilon=\left\lbrace (\tau,\omega):\tau \in J,~|\omega-\omega(0,\epsilon) |\leq \dfrac{b}{2}\right\rbrace $. Clearly  $R_\epsilon\subset R_0.$ Consider the function  $f_\epsilon: R_\epsilon\to\mathbb{R}$ defined by
$$
f_\epsilon\left( \tau, \omega(\tau) \right)=f\left( \tau, \omega(\tau) \right)+\epsilon.
$$ 
Then, $f_\epsilon$ satisfies the Lipschitz type condition with same Lipschitz constants $L_1, L_2$ as defined in Theorem \ref{ABCThm4.2}. Further,
$$
|f_\epsilon\left( \tau, \omega(\tau) \right)|\leq M+\frac{b}{2}, \; (\tau,\omega)\in R_\epsilon.
$$

\noindent Since $f_\epsilon$  satisfies  all assumptions of Theorem \ref{ABCThm4.3}, by applying it, the ABC-FDEs \eqref{ABCe4.12}--\eqref{ABCe4.13} has at least one  solution $\omega(\tau,\epsilon)$ on $J''$.

\noindent Let $0<\epsilon_2<\epsilon_1\leq \epsilon$, then we have
\begin{align*}\label{eABCe4.7}
\omega(0,\epsilon_2)=\omega_0+\epsilon_2&<\omega_0+\epsilon_1=\omega(0,\epsilon_1);\\
^{ABC}{_0\mathcal{D}}_\tau^\alpha \omega(\tau,\epsilon_2)&= f\left( \tau, \omega(\tau,\epsilon_2)\right)+\epsilon_2,\;\tau\in J'';\\
^{ABC}{_0\mathcal{D}}_\tau^\alpha \omega(\tau,\epsilon_1)&> f\left( \tau, \omega(\tau,\epsilon_1)\right)+\epsilon_2,\;\tau\in J''.
\end{align*}  
Note that $\omega(\tau,\epsilon_1)$ and $\omega(\tau,\epsilon_2)$ respectively are lower and upper solutions of ABC-FDEs, with $\omega(0,\epsilon_2)<\omega(0,\epsilon_1)$. Therefore by applying Theorem \ref{ABCThm3.5} we have, 
$$
\omega(\tau,\epsilon_2)<\omega(\tau,\epsilon_1),\; \tau\in J''.
$$

\noindent Next we show that  the family  $\left\lbrace \omega(\tau,\epsilon)\right\rbrace $ of solutions of the ABC-FDEs \eqref{ABCe4.12}--\eqref{ABCe4.13} is uniformally bounded on $J''$. Proceeding as in the proof of Theorem \ref{ABCThm4.3}, for any $\tau\in J''$, we have
\begin{align*}
&|\omega(\tau,\epsilon)-\omega(0,\epsilon)|\leq \dfrac{2M+b}{2B(\alpha)}\left( 1-\alpha+\dfrac{\beta_0^\alpha}{\Gamma(\alpha)}\right), \tau\in J''.
\end{align*}
Since 
$\beta_0\leq \left[ \dfrac{ (bB(\alpha)-(2M+b)(1-\alpha))\Gamma(\alpha)}{(2M+b)}\right]^\frac{1}{\alpha},$ above inequality reduces to
$$
|\omega(\tau,\epsilon)-\omega(0,\epsilon)|\leq \dfrac{2M+b}{2B(\alpha)}\left( 1-\alpha+\dfrac{bB(\alpha)}{2M+b}-1+\alpha\right) \\
\leq \dfrac{b}{2}<b,\; \tau\in J''. 
$$
Since  $f_\epsilon$ satisfies assumptions of Theorem \ref{ABCThm4.2},  the family  $\left\lbrace \omega(\tau,\epsilon)\right\rbrace $ is equicontinious on $J$. Applying  Ascoli-Arzela's theorem there exists a decreasing sequence $\left\lbrace \epsilon_n\right\rbrace $  with $\epsilon_n>0$ for all $n$ and $\epsilon_n\to 0$ such that  $\omega(\tau,\epsilon_n)\to \eta(\tau)$ uniformly on $J'' $, where $\eta\in C([0,\beta_0],\mathbb{R})$ uniformly on   $[0,\beta]$. By uniform continuity of $f_\epsilon$, we have
$$
f_{\epsilon_n}(\tau,\omega(\tau,\epsilon_n))\to f(\tau,\eta(\tau)),\; \tau\in J''.
$$ 
By replacing  $\epsilon$ with $\epsilon_n$ in equation \eqref{ABCe4.12} and then taking limit as $n\to\infty$, we obtain
 \begin{align*}
 ^{ABC}{_0\mathcal{D}}_\tau^\alpha \eta(\tau)&= f\left( \tau, \eta(\tau)\right),\tau\in J'',\\
 \eta(0)&=\omega_0.
 \end{align*}
 This proves that $\eta(\tau)$ is a solution of ABC-FDEs\eqref{ABCe1.1}--\eqref{ABCe1.2}. 
 
 It remains to prove that $\eta(\tau)$ is the maximal solution of the  ABC-FDEs \eqref{ABCe1.1}--\eqref{ABCe1.2}.   Let $\omega(\tau)$ be any solution of \eqref{ABCe1.1}--\eqref{ABCe1.2} on $J''$. Then for any $\epsilon>0$,
\begin{align*}
\omega_0=\omega(0)&<\omega_0+\epsilon=\omega(0,\epsilon);\\ 
^{ABC}{_0\mathcal{D}}_\tau^\alpha \omega(\tau)&> f\left( \tau, \omega(\tau,\epsilon_2)\right)+\epsilon,\;\tau\in J'';\\
^{ABC}{_0\mathcal{D}}_\tau^\alpha \omega(\tau,\epsilon)&= f\left( \tau, \omega(\tau,\epsilon)\right)+\epsilon,\;\tau\in J''.
\end{align*}
Note that $\omega(\tau)$ and $\omega(\tau,\epsilon)$ respectively are lower and upper solutions of ABC-FDEs, with $\omega_0<\omega_0+\epsilon$. Therefore by applying Theorem \ref{ABCThm3.5} we have,
$$
 \omega(\tau)<\omega(\tau,\epsilon),\; \tau\in J''.
$$
Taking limit as $\epsilon\to 0$, we obtain
$$
\omega(\tau)\leq \eta(\tau),\; \tau\in J''.
$$
 This proves that $\eta(\tau)$ is the maximal solution of the ABC-FDEs \eqref{ABCe1.1}--\eqref{ABCe1.2} on $J"$.
\end{proof}

\section{Existence of Global Solution}
\begin{theorem}\label{ABCThm5.1}
Assume that $m$ is any differentiable function defined on $J$ such that $ ^{ABC}{_0\mathcal{D}}_\tau^\alpha m\in C(J)$ and $g\in C(J\times\mathbb{R}_+,\mathbb{R}_+)$ such that 
\begin{equation}\label{e4.1}
^{ABC}{_0\mathcal{D}}_\tau^\alpha m(\tau)\leq g(\tau,m(\tau)) ,\; \tau\in J
\end{equation}
and let $\eta(\tau)$ is the maximal solution of the ABC-FDEs 
\begin{align*}
^{ABC}{_0\mathcal{D}}_\tau^\alpha  u(\tau)&= g\left( \tau, u(\tau)\right), \tau\in J,\;  \\
u(0)&=u_0.
\end{align*}
Then $m(0)\leq u(0)$, implies  $m(\tau)\leq \eta(\tau),\;\tau\in J.$
\end{theorem}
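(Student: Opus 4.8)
The plan is to run the classical Lakshmikantham--Vatsala perturbation argument: since the differential inequality \eqref{e4.1} and the equation defining $\eta$ are both non-strict, Theorem \ref{ABCThm3.5} cannot be applied to $m$ and $\eta$ directly, so instead I would compare $m$ with the solutions of slightly perturbed initial value problems and then pass to the limit. Concretely, for $\epsilon>0$ consider
\begin{align*}
^{ABC}{_0\mathcal{D}}_\tau^\alpha u_\epsilon(\tau)&=g(\tau,u_\epsilon(\tau))+\epsilon,\quad \tau\in J,\\
u_\epsilon(0)&=u_0+\epsilon .
\end{align*}
This is exactly the family used in the construction of the maximal solution in the proof of Theorem \ref{ABCThm4.4}: using the existence result Theorem \ref{ABCThm4.3} (on successively larger subintervals, together with the monotonicity/continuation argument already present there, and noting the consistency requirement $g(0,m(0))=0$ forced by Corollary \ref{cor1} so that the perturbed data are admissible) one obtains, for all sufficiently small $\epsilon$, a solution $u_\epsilon$ on $J$, and along a decreasing null sequence $\epsilon_n\to 0$ the $u_{\epsilon_n}$ converge uniformly on $J$ to the maximal solution $\eta$.

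Next I would fix such an $\epsilon$ and apply Theorem \ref{ABCThm3.5} with $v=m$, $w=u_\epsilon$ and $g$ in the role of $f$. Hypothesis (i) is precisely \eqref{e4.1}, while hypothesis (ii) now holds in the \emph{strict} form, because
$$
^{ABC}{_0\mathcal{D}}_\tau^\alpha u_\epsilon(\tau)=g(\tau,u_\epsilon(\tau))+\epsilon>g(\tau,u_\epsilon(\tau)),\quad \tau\in J,
$$
so one of the two inequalities is strict as the theorem demands; moreover $m(0)\le u_0<u_0+\epsilon=u_\epsilon(0)$. Theorem \ref{ABCThm3.5} then gives $m(\tau)<u_\epsilon(\tau)$ for all $\tau\in J$. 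Letting $\epsilon=\epsilon_n\to 0$ and using the uniform convergence $u_{\epsilon_n}\to\eta$, these strict inequalities pass to the limit and yield $m(\tau)\le\eta(\tau)$ on $J$, which is the claim.

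The hard part is the first step rather than the comparison itself: one must guarantee that the perturbed problems are solvable on all of $J$ — Theorem \ref{ABCThm4.3} only produces a local solution — and that the limit of the $u_{\epsilon_n}$ is genuinely the \emph{maximal} solution and not merely some solution. Both points are dealt with by recycling the monotone-in-$\epsilon$ family and its continuation already established in the proof of Theorem \ref{ABCThm4.4}; once the $+\epsilon$ perturbation has produced the strict inequality in hypothesis (ii), the remainder is routine.
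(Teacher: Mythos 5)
Your proposal follows essentially the same route as the paper's own proof: perturb to $^{ABC}{_0\mathcal{D}}_\tau^\alpha u_\epsilon = g(\tau,u_\epsilon)+\epsilon$ with $u_\epsilon(0)=u_0+\epsilon$, invoke the strict comparison result (Theorem \ref{ABCThm3.5}) to get $m<u_\epsilon$ on $J$, and let $\epsilon\to 0$ using the convergence of the perturbed solutions to the maximal solution as in the proof of Theorem \ref{ABCThm4.4}. Your added attention to the global solvability of the perturbed problems and to identifying the limit as the maximal solution is a point the paper glosses over by simply citing Theorem \ref{ABCThm4.4}, but it does not change the argument.
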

\begin{proof}
Let $\epsilon>0$ be arbitrary. Let $u(\tau,\epsilon)$ be a solution of the ABC-FDEs of the form
\begin{align*}
^{ABC}{_0\mathcal{D}}_\tau^\alpha u(\tau)&= g\left( \tau, u(\tau)\right)+\epsilon, \tau\in J,\\
u(0)&=u_0+\epsilon.
\end{align*}
Therefore,
\begin{align}\label{e5.2}
^{ABC}{_0\mathcal{D}}_\tau^\alpha u(\tau,\epsilon)&> g\left( \tau, u(\tau,\epsilon)\right), \tau\in J,\nonumber\\
u(0)&=u_0+\epsilon
\end{align}
Therefore $m$ is lower solution and $u(\tau,\epsilon)$ is upper solution of the ABC-FDE
$$
^{ABC}{_0\mathcal{D}}_\tau^\alpha \omega(\tau)= g\left( \tau, \omega(\tau)\right), \tau\in J.
$$ 
Further,
$$
m(0)\leq u(0)<u(0)+\epsilon=u(0,\epsilon).
$$
By applying Theorem \ref{ABCThm3.5}, we obtain
$$
m(\tau)\leq u(\tau,\epsilon),\;\tau\in J,\;\epsilon>0.
$$
Taking $\epsilon\to 0$, and following similar approach as in the proof of Theorem \ref{ABCThm4.4}, we obtain
$$
m(\tau)\leq \eta(\tau),\;\tau\in J.
$$
\end{proof}
\begin{theorem}\label{ABCThm5.2}
Assume that $f\in C([0,\infty)\times\mathbb{R},\mathbb{R})~\mbox{and}~\;g\in C([0,\infty)\times\mathbb{R}_+,\mathbb{R}_+)$ are  such that $|f(\tau,\omega)|\leq g(\tau,|\omega|)$. Further, suppose that there exists local  solution $\omega(\tau,\omega_0)$ of the ABC-FDEs
\begin{align}
^{ABC}{_0\mathcal{D}}_\tau^\alpha \omega(\tau)&= f\left( \tau, \omega(\tau)\right), \tau\in [0, \infty),\label{ABCe5.3} \\
\omega(0)&=\omega_0,\label{ABCe5.4}
\end{align}
and maximal solution $\eta(\tau)$ of 
\begin{align*}
^{ABC}{_0\mathcal{D}}_\tau^\alpha u(\tau)&= g\left( \tau, u(\tau)\right), \tau\in[0,  \infty), \\
u(0)&=u_0\geq 0.
\end{align*}
Then the largest interval of existence of solution $\omega(\tau,\omega_0)$ of the  \eqref{ABCe5.3}--\eqref{ABCe5.4} such that $|\omega_0|<u_0$ is $[0,\infty)$.
\end{theorem}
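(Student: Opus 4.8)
The plan is to argue by contradiction with a continuation argument in the spirit of the classical Lakshmikantham--Vatsala global existence theorems, adapted to the ABC derivative. Let $[0,\beta)$ be the maximal interval to which the local solution $\omega(\cdot,\omega_0)$ of \eqref{ABCe5.3}--\eqref{ABCe5.4}, with $|\omega_0|<u_0$, can be continued, and suppose for contradiction that $\beta<\infty$. The goal is to produce an extension of $\omega(\cdot,\omega_0)$ past $\beta$, which contradicts maximality and therefore forces $\beta=\infty$.

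First I would establish the a priori bound $|\omega(\tau,\omega_0)|\le\eta(\tau)$ for $\tau\in[0,\beta)$. Since $^{ABC}{_0\mathcal{D}}_\tau^\alpha\omega(\tau)=f(\tau,\omega(\tau))$ and $|f(\tau,\omega)|\le g(\tau,|\omega|)$, one shows that $m(\tau):=|\omega(\tau,\omega_0)|$ satisfies $^{ABC}{_0\mathcal{D}}_\tau^\alpha m(\tau)\le g(\tau,m(\tau))$ together with $m(0)=|\omega_0|<u_0=u(0)$, and then Theorem~\ref{ABCThm5.1} yields $m(\tau)\le\eta(\tau)$ on $[0,\beta)$. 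As $\eta$ is continuous on the compact interval $[0,\beta]$, it is bounded there by some $N$, so $(\tau,\omega(\tau,\omega_0))$ remains in the compact rectangle $[0,\beta]\times[-N,N]$ for every $\tau\in[0,\beta)$, and hence, by continuity of $g$, $|f(\tau,\omega(\tau,\omega_0))|\le g(\tau,|\omega(\tau,\omega_0)|)\le M^{*}$ for some constant $M^{*}$.

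Next I would run the continuation principle. Writing $\omega(\cdot,\omega_0)$ through the equivalent integral equation of Lemma~\ref{ABCLm4.1} and estimating $|\omega(\tau_1,\omega_0)-\omega(\tau_2,\omega_0)|$ for $0<\tau_1<\tau_2<\beta$ exactly as in Theorem~\ref{ABCThm4.2} (with $M$ replaced by $M^{*}$) gives a bound that tends to $0$ as $|\tau_2-\tau_1|\to0$ uniformly in $\tau_1,\tau_2$; thus $\omega(\cdot,\omega_0)$ is uniformly continuous on $[0,\beta)$, the limit $\omega(\beta^{-}):=\lim_{\tau\to\beta^{-}}\omega(\tau,\omega_0)$ exists, and setting $\omega(\beta,\omega_0)=\omega(\beta^{-})$ extends the solution continuously to $[0,\beta]$. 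Applying a local existence result at the base point $(\beta,\omega(\beta^{-}))$ (as in Theorem~\ref{ABCThm4.3}, whose hypotheses hold with data uniform over the compact rectangle above) produces a solution on $[\beta,\beta+h]$ for some $h>0$; pasting the two pieces yields a solution of \eqref{ABCe5.3}--\eqref{ABCe5.4} on $[0,\beta+h)$, contradicting the maximality of $\beta$. Hence $\beta=\infty$, i.e.\ the largest interval of existence of $\omega(\cdot,\omega_0)$ with $|\omega_0|<u_0$ is $[0,\infty)$.

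The step I expect to be the main obstacle is the a priori bound: $m(\tau)=|\omega(\tau,\omega_0)|$ need not be differentiable at the zeros of $\omega(\cdot,\omega_0)$, so the differential inequality $^{ABC}{_0\mathcal{D}}_\tau^\alpha m(\tau)\le g(\tau,m(\tau))$ demanded by Theorem~\ref{ABCThm5.1} must be justified carefully --- for instance by smoothing $|\cdot|$ via $\sqrt{x^{2}+\delta^{2}}$ and letting $\delta\to0$, or by using the pointwise estimate $^{ABC}{_0\mathcal{D}}_\tau^\alpha|\omega|(\tau)\le\mathrm{sgn}\big(\omega(\tau)\big)\,{}^{ABC}{_0\mathcal{D}}_\tau^\alpha\omega(\tau)$ at points where $\omega(\tau)\neq0$ (which itself follows by applying the minimum version of Theorem~\ref{ABCThm3.1} to $|\omega(\sigma)|-\mathrm{sgn}(\omega(\tau))\,\omega(\sigma)$), and which together with $|{}^{ABC}{_0\mathcal{D}}_\tau^\alpha\omega(\tau)|=|f(\tau,\omega(\tau))|\le g(\tau,|\omega(\tau)|)$ gives exactly the required inequality. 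Once $|\omega(\tau,\omega_0)|\le\eta(\tau)$ is in hand, the uniform-continuity and continuation steps are routine and mirror Theorems~\ref{ABCThm4.2}--\ref{ABCThm4.3}.
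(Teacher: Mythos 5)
Your proposal follows essentially the same route as the paper's proof: obtain the a priori bound $|\omega(\tau,\omega_0)|\le\eta(\tau)$ on $[0,\beta)$ by showing $m(\tau)=|\omega(\tau,\omega_0)|$ satisfies $^{ABC}{_0\mathcal{D}}_\tau^\alpha m(\tau)\le g(\tau,m(\tau))$ and invoking the comparison result, use the resulting uniform bound on $f$ along the solution to derive the Lipschitz-type modulus of continuity as in Theorem~\ref{ABCThm4.2}, conclude that $\lim_{\tau\to\beta^-}\omega(\tau,\omega_0)$ exists, and restart the local existence argument at $\beta$ to contradict maximality. Your handling of the non-differentiability of $|\omega|$ at zeros of $\omega$ is in fact more careful than the paper's, which disposes of this point with the bare assertion $|f|'\le|f'|$ inside the integral defining the ABC derivative.
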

\begin{proof}
By assumption there exists a local solution $\omega(\tau,\omega_0)$ of the \eqref{ABCe5.3}--\eqref{ABCe5.4} on the interval $[0,\beta)$, where $\beta<\infty$ with $|\omega_0|<u_0$. Suppose on contrary $\beta$ can not be increased further.

\noindent Define $m(\tau)=|\omega(\tau,\omega_0)|,\;\tau\in[0, \beta)$.  As $|f|'\leq |f'|$, for any $ \tau\in[0, \beta)$, we find
\begin{align*}
^{ABC}{_0\mathcal{D}}_\tau^\alpha m(\tau)&=\dfrac{B(\alpha)}{1-\alpha}\int_{0}^\tau \mathbb{E}_\alpha\left(-\dfrac{\alpha}{1-\alpha} (\tau-\sigma)^\alpha\right)m'(\sigma)d\sigma\\
&= \dfrac{B(\alpha)}{1-\alpha}\int_{0}^\tau \mathbb{E}_\alpha\left(-\dfrac{\alpha}{1-\alpha} (\tau-\sigma)^\alpha\right)|\omega(\sigma,\omega_0|'d\sigma\\
&\leq \dfrac{B(\alpha)}{1-\alpha}\int_{0}^\tau \mathbb{E}_\alpha\left(-\dfrac{\alpha}{1-\alpha} (\tau-\sigma)^\alpha\right)|\omega'(\sigma,\omega_0|d\sigma\\
&=\left| \dfrac{B(\alpha)}{1-\alpha}\int_{0}^\tau \mathbb{E}_\alpha\left(-\dfrac{\alpha}{1-\alpha} (\tau-\sigma)^\alpha\right)\omega'(\sigma,\omega_0)d\sigma\right| \\
&=|^{ABC}{_0\mathcal{D}}_\tau^\alpha \omega(\tau,\omega_0)|
\end{align*}
Therefore,
$$^{ABC}{_0\mathcal{D}}_\tau^\alpha m(\tau)\leq|^{ABC}{_0\mathcal{D}}_\tau^\alpha \omega(\tau,\omega_0)|=|f(\tau,\omega(\tau,\omega_0))|\leq g(\tau,|\omega(\tau,\omega_0)|)=g(\tau,m(\tau)),\; \tau\in [0,\beta).
$$

\noindent This implies $m(\tau)$ is lower solution of
\begin{align}
^{ABC}{_0\mathcal{D}}_\tau^\alpha u(\tau)&= g\left( \tau, u(\tau)\right), \tau\in[0,  \beta). \label{ABCe5.5} 
\end{align}
Further by assumption $\eta(\tau)$ is the maximal solution of the problem  \eqref{ABCe5.5}. Since $m(0)= |\omega_0|\leq u_0$, by applying Theorem \ref{ABCThm3.5}, we obtain
$$
m(\tau)=|\omega(\tau,\omega_0)|\leq \eta(\tau),\; \tau\in [0,\beta).
$$ 
By assumption $\eta(\tau)$ exists on  $[0,\infty)$. Therefore by continuity of $g$ on $[0,\beta]\times\mathbb{R}_+$,
 there exists $M>0$ such that, 

$$
|g\left( \tau, \eta(\tau)\right)|\leq M, \;  \tau\in [0,\beta). 
$$
Let $0\leq \tau_1\leq \tau_2< \beta$.  Then  following similar steps as in the proof of  Theorem \ref{ABCThm3.5}, we have
$$
|\omega(\tau_1,\omega_0)-\omega(\tau_2,\omega_0)|\leq \dfrac{\Gamma(\alpha)(1-\alpha)L_1+2M}{\Gamma(\alpha)\left[ B(\alpha)-(1-\alpha)L_2\right] }(\tau_2-\tau_1).
$$ 
 From above inequality it follows that, 
 $$
 \lim\limits_{\tau_1,\; \tau_2\to \beta^-}\omega(\tau_1,\omega_0)=\lim\limits_{\tau_2,\; \tau_1\to \beta^-}\omega(\tau_2,\omega_0),
 $$ 
 for any $\tau_1,\tau_2$ with $0\leq \tau_1\leq \tau_2< \beta$. This implies that $\lim\limits_{\tau\to \beta^-}\omega(\tau,\omega_0)$ exists. 
Let
$$
\omega(\beta, \omega_0)=\lim\limits_{\tau\to \beta^-}\omega(\tau,\omega_0).
$$ 
Then by assumption the ABC-FDEs
\begin{align*}
^{ABC}{_0\mathcal{D}}_\tau^\alpha \omega(\tau)&= f\left( \tau, \omega(\tau)\right),\tau\geq \beta,   \\
\omega(\beta)&=\omega_\beta,
\end{align*}
has a local solution. This implies that   $\omega(\tau,\omega_0)$ can be continued beyond $\beta$ which is a  contradiction our assumption. Hence every solution  $\omega(\tau,\omega_0)$   of the ABC-FDEs \eqref{ABCe5.3}--\eqref{ABCe5.4} exists on $[0,\infty)$.
\end{proof}

\section*{Conclusion}
The comparison results, local, extremal, and global existence of a solution, derived without demanding the monotonicity and Holder continuity assumption on the nonlinear function involving ABC-FDEs. The estimations on ABC-fractional derivative and the comparison results obtained to ABC-FDEs one can use to research the existence, uniqueness  and qualitative properties of solutions for various classes of ABC-FDEs.

\end{document}